\documentclass[reqno]{amsart}
\usepackage[utf8]{inputenc}
\usepackage[T1]{fontenc}
\usepackage{lmodern}
\usepackage[top=3cm, bottom=3cm, left=3cm, right=3cm]{geometry}
\usepackage{amsmath, amssymb, amsthm}
\usepackage{hyperref}
\usepackage{xcolor}
\usepackage{enumitem}
\usepackage{fancyhdr}
\usepackage{booktabs}
\usepackage[normalem]{ulem}

\newtheorem{theorem}{Theorem}[section]

\newtheorem{corollary}[theorem]{Corollary}
\newtheorem{proposition}[theorem]{Proposition}
\newtheorem{lemma}[theorem]{Lemma}

\theoremstyle{remark}

\usepackage{enumitem}
\usepackage{hyperref,amssymb}
\begin{document}

	\title[$\mathrm{CCS}$ groups]{Finite groups in which every proper characteristic subgroup is cyclic}
	\author[M. Damele]{Marco Damele}
	\address{Marco Damele, Facoltà di Scienze Matematiche, Fisiche e Naturali, University of Cagliari, Via Ospedale, 72, 09124 Cagliari, Italy} 
	\email{m.damele4@studenti.unica.it}
	\author[F. Mastrogiacomo]{Fabio Mastrogiacomo}
	\address{Fabio Mastrogiacomo, Dipartimento di Matematica ``Felice Casorati", University of Pavia, Via Ferrata 5, 27100 Pavia, Italy} 
	\email{fabio.mastrogiacomo01@universitadipavia.it}
	\subjclass[2020]{20E34}
	\keywords{$\mathrm{CCS}$ groups, characteristic subgroups, cyclic group}       
	\thanks{The second author is member of the GNSAGA INdAM research group and kindly acknowledges their support.}
	\maketitle	
	\begin{abstract}
Let $G$ be a finite, non-cyclic, non-characteristically simple group such that all its proper characteristic subgroups are cyclic. We call such a group a $\mathrm{CCS}$ group, short for \emph{Characteristic Cyclic Subgroups}. In this paper, we provide a complete classification of these groups.  

As a consequence, we obtain an alternative proof that any skew brace whose multiplicative group is cyclic of $p$-power order, with $p$ an odd prime, necessarily has a cyclic additive group. Moreover, we describe the multiplicative group of skew braces whose additive group is a solvable, non-nilpotent $\mathrm{CCS}$ group.

	\end{abstract}
	
	\section{Introduction}
	Let $\mathfrak{C}$ be a class of groups. A finite group $G$ is said to be $\mathfrak{C}$-critical, or minimal non-$\mathfrak{C}$, if $G \notin \mathfrak{C}$ while every proper subgroup of $G$ belongs to $\mathfrak{C}$. For instance, a minimal non-cyclic group is a finite non-cyclic group in which all proper subgroups are cyclic. Minimal non-$\mathfrak{C}$ groups have been extensively studied for different choices of $\mathfrak{C}$, with the aim of achieving a complete classification. Understanding the structure of such groups often provides valuable insight into the properties that characterize membership in $\mathfrak{C}$. The first substantial results in this direction were obtained in \cite{MM}, where the authors classified both minimal non-abelian and minimal non-cyclic groups. Subsequently, further classifications have been carried out for minimal non-nilpotent groups (the so-called "Schmidt groups") and for minimal non-supersolvable groups (see \cite{BER}, \cite{GL}, \cite{BE}, respectively). Ito (see \cite{Ito1951}) considered the minimal non-p-nilpotent groups for $p$ a prime, which turn out to be just the Schmidt groups. Robinson characterised in \cite{Robinson1969} the minimal non-$\mathrm{PST}$-groups, where a $\mathrm{PST}$-group is a group in which Sylow permutability is a transitive relation. In a related line of research, several authors have investigated groups not belonging to $\mathfrak{C}$ in which certain natural families of subgroups lie in $\mathfrak{C}$. A classical example is given by the $\mathrm{Z}$-groups, introduced by Suzuki in~\cite{S}, namely groups whose Sylow subgroups are all cyclic. More recently, in~\cite{DM} the authors classified the non-cyclic groups in which all normal subgroups are cyclic. In our terminology, a non-cyclic, non-simple group in which every proper normal subgroup is cyclic will be called an $\mathrm{NCS}$ group (Normal Cyclic Subgroups). In this paper we extend this perspective by introducing and studying a broader class of groups. Let $G$ be a finite group, and let $\mathcal{C}(G)$ denote the set of all proper characteristic subgroups of $G$. Suppose that $G$ is non-cyclic, that $\mathcal{C}(G)\neq \varnothing$ (i.e. $G$ is not characteristically simple), and that every subgroup in $\mathcal{C}(G)$ is cyclic. In this case, we say that $G$ is a $\mathrm{CCS}$ group (Characteristic Cyclic Subgroups). For example $D_{8}$, the dihedral group of order $8$, is a $\mathrm{CCS}$ group since its non-trivial characteristic subgroups are $C_4$ and $C_2$. 
   
	
	
	Our main result provides a complete classification of $\mathrm{CCS}$ groups.
	\begin{theorem}\label{thm:main}
		Let $G$ be a $\mathrm{CCS}$ group.
		Then, one of the following holds.
		\begin{itemize}
			\item[(i)] $G \cong p^{1+2n}_+$, where $p$ is an odd prime and $n$ is a positive integer.
			\item [(ii)]$G \cong 2^{1+2n}_+$ or $G\cong 2^{1+2n}_-$ for some positive integer $n$.
			\item [(iii) ]$G \cong 2^{1+2n}_+ \circ C_4$, where $\circ$ denotes the central product.
			\item[(iv)] $G$ is a dihedral group.
			\item[(v)] $G$ is a dicyclic group.
			\item[(vi)] $G$ has presentation
			\[
			\langle x,y \, | \, x^{mp} = y^p = 1, yxy^{-1} = x^k \rangle,
			\]
			where $m,k$ are positive integers, $p$ the smallest prime dividing the order of $G$, $(m,p)=1$, $k^p \equiv 1 \mod mp$, $k \not \equiv 1 \mod mp$, and $(k-1,m)=1$.
			\item[(vii)] $G$ has presentation 
			\[  \langle x,y \ | \ x^m=y^{p^{\alpha}}=1,yxy^{-1}=x^{k} \rangle,
			\]
			where $m,k$ are positive integers, $p$ the smallest prime dividing the order of $G$, $(m, k-1)=1$ and $k^{p}\equiv1 \mod m$.
			\item[(viii)]  $G$ is a perfect group, $Z(G)$ is cyclic and it is the unique maximal characteristic subgroup of $G$. 
			Moreover, if this happens, $G$ fits into a short exact sequence of the form $$1 \rightarrow C \rightarrow G \rightarrow S \times \cdots \times S$$ where $S$ is a non-abelian simple group and $C$ is a cyclic group isomorphic to a quotient of $H^{2}(S,\mathbb{Z}) \times \cdots \times H^{2}(S,\mathbb{Z})$.
		\end{itemize}
		Conversely, each of these groups is a $\mathrm{CCS}$ group.
	\end{theorem}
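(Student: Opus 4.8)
The plan is to stratify a $\mathrm{CCS}$ group $G$ according to the behaviour of the characteristic subgroups $G'$ and $F^{*}(G)$. One first disposes of the abelian case: a finite non-cyclic abelian group is either elementary abelian (hence characteristically simple) or has a non-cyclic characteristic subgroup (a non-cyclic primary component, or $\Omega_{1}$ of one), so no abelian group is a $\mathrm{CCS}$ group. From now on $G$ is non-abelian. Suppose first that $G$ is perfect. Then $Z(G)$ is a proper, hence cyclic, characteristic subgroup; since the automorphism group of a cyclic group is abelian and $G$ is perfect, $G$ centralises $F(G)$, which forces $F(G)=Z(G)$. The layer $E(G)$ cannot be trivial (otherwise $C_{G}(F^{*}(G))\le F^{*}(G)=Z(G)$ would make $G$ abelian) and cannot be a proper characteristic subgroup (a non-trivial perfect group is not cyclic), so $G=E(G)$ is a central product of quasisimple groups. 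Grouping the components by the isomorphism type of their simple quotient, the product of those of any fixed type is characteristic, so there is a single type $S$; then $G/Z(G)\cong S\times\cdots\times S$, $Z(G)=F(G)$ is a quotient of a product of Schur multipliers, and $Z(G)$ is the unique maximal characteristic subgroup because every proper characteristic subgroup is cyclic-normal, hence central, in the perfect group $G$. This is exactly conclusion~(iix).

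Suppose now $G$ is not perfect, so $G'$ is a proper characteristic subgroup, hence cyclic, hence solvable; as $G/G'$ is abelian, $G$ is solvable. If $G$ is also nilpotent it is the direct product of its Sylow subgroups, all characteristic, and two non-trivial factors would make $G$ cyclic, so $G$ is a $p$-group; here one exploits that $Z_{i}(G)$, $\Omega_{i}(G)$ and $\mho_{i}(G)$ are characteristic. For $p$ odd, $\Omega_{1}(G)$ cannot be a proper non-cyclic subgroup and cannot be cyclic (a unique subgroup of order $p$ would make $G$ cyclic), so $\Omega_{1}(G)=G$; a short argument with these operators, together with $G'\cap Z(G)\ne 1$, then forces $\Phi(G)=G'=Z(G)\cong C_{p}$ and $G$ of exponent $p$, i.e.\ $G\cong p^{1+2n}_{+}$, which is~(i). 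For $p=2$ the same dichotomy for $\Omega_{1}(G)$ yields either $G\cong Q_{2^{n}}$ (a dicyclic group,~(v)) or $G$ generated by involutions, and a further inspection of $\mho_{1}(G)$, $G'$ and $Z(G)$ leaves precisely the extraspecial $2$-groups~(ii), the group $2^{1+2n}_{+}\circ C_{4}$~(iii) and the dihedral $2$-groups~(iv). If instead $G$ is solvable but not nilpotent, then $F(G)$ is a proper, hence cyclic, characteristic subgroup with $C_{G}(F(G))=F(G)$, so $G/F(G)$ is abelian and embeds in $\mathrm{Aut}(F(G))$; exploiting that preimages of characteristic subgroups of the abelian group $G/F(G)$ are characteristic in $G$, one forces $G/F(G)$ to be a cyclic group whose order is a power of the smallest prime $p$ dividing $|G|$. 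Writing $F(G)=\langle x\rangle$ and choosing $y$ over a generator, the conjugation arithmetic pins down the split extensions, producing the presentations~(vi) and~(vii) together with the dihedral groups~(iv); the non-split extensions are exactly the dicyclic groups~(v).

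For the converse one checks, family by family, that every proper characteristic subgroup is cyclic. For extraspecial $G$, the point is that $\mathrm{Aut}(G)$ induces on the $\mathbb{F}_{p}$-space $G/Z(G)$ the full symplectic group (orthogonal group when $p=2$), which has no proper non-zero invariant subspace, while every non-trivial normal subgroup of the $p$-group $G$ contains $Z(G)$; hence the proper characteristic subgroups are only $1$ and $Z(G)\cong C_{p}$. The group $2^{1+2n}_{+}\circ C_{4}$ is handled the same way, with $Z(G)\cong C_{4}$ and a quadratic form of one-dimensional radical. For dihedral and dicyclic groups the proper characteristic subgroups are precisely the (cyclic) subgroups of the canonical cyclic normal subgroup, the reflection-type subgroups being permuted by automorphisms. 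For~(vi) and~(vii), the number-theoretic conditions --- chiefly $(k-1,m)=1$ --- are exactly what is needed to keep the subgroups $\Omega_{i}$, $\mho_{i}$ and their relatives cyclic. Finally, a perfect group $G$ with $Z(G)$ cyclic and equal to its unique maximal characteristic subgroup has all proper characteristic subgroups inside $Z(G)$, hence cyclic, and is non-cyclic and non-characteristically-simple, so it is a $\mathrm{CCS}$ group.

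The main obstacle is the solvable non-nilpotent case: showing that $G/F(G)$ is forced to be a cyclic $p$-group rather than merely abelian, then separating cleanly the split presentations~(vi),~(vii) from the dihedral and dicyclic groups, and calibrating the congruence conditions so that they characterise $\mathrm{CCS}$-ness exactly. The $p=2$ part of the nilpotent case is also delicate: one must discard the semidihedral and modular groups (each has a characteristic $D_{8}$ or $C_{2}\times C_{2}$) while retaining the dihedral ones, and verify the claims about the action of $\mathrm{Aut}(G)$ on $G/Z(G)$ and $G/\Phi(G)$ in the extraspecial and $2^{1+2n}_{+}\circ C_{4}$ cases.
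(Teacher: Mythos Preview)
Your overall stratification---nilpotent, solvable non-nilpotent, perfect---coincides with the paper's, and most of the outline is sound. Two points deserve comment.

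\textbf{The perfect case.} Your route via the generalised Fitting subgroup is genuinely different from, and in some ways cleaner than, the paper's. The paper observes that every proper characteristic subgroup of a perfect $\mathrm{CCS}$ group is cyclic, hence central, so $Z(G)$ is the unique maximal characteristic subgroup and $G/Z(G)$ is characteristically simple; it then invokes Schur multiplier machinery to describe the extension. You instead show $F(G)=Z(G)$, force $E(G)=G$, and read off the component structure directly. Both arrive at the same short exact sequence, but your argument actually yields the extra structural fact $G=E(G)$ (a central product of isomorphic quasisimple groups) rather than merely ``perfect central extension of $S^{n}$''.

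\textbf{The gap in the solvable non-nilpotent case.} You correctly identify this as the main obstacle, and indeed your sketch stops short of the crux. The argument you give---preimages of characteristic subgroups of the abelian group $G/F(G)$ are characteristic in $G$, hence cyclic, hence nilpotent, hence inside $F(G)$---only yields that $G/F(G)$ is \emph{characteristically simple} abelian, i.e.\ elementary abelian $C_{p}^{k}$ for the smallest prime $p$. It does not by itself force $k=1$. The paper's resolution (Theorem~\ref{sec2:thm:1}) is a minimal-counterexample reduction: one first shows one may assume $\Phi(G)=1$ and then $Z(G)=1$, whence for every non-trivial $x\in F(G)$ the centraliser $\mathbf{C}_{G}(x)$ is characteristic, hence cyclic, hence contained in $F(G)$. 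This makes $G$ a Frobenius group with kernel $F(G)$ and complement $C_{p}^{k}$, and the classical structure of Frobenius complements (cyclic or generalised quaternion Sylow subgroups) forces $k=1$. Without this step the subsequent split into the four Sylow types $C_{p}\times C_{p}$, $C_{p^{\alpha}}$, $D_{2^{\alpha}}$, $Q_{2^{\alpha}}$ and the derivation of presentations (vi), (vii) does not get off the ground.

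A secondary point: in the odd-$p$ nilpotent case your ``short argument with these operators'' is too compressed. Knowing $\Omega_{1}(G)=G$ and $G'$, $Z(G)$ cyclic does not immediately give exponent $p$. The paper's route is: $\Phi(G)$ cyclic implies $\Phi(G)\le Z(G)$ (a cited result on $p$-groups with cyclic Frattini), hence class $2<p$, hence $G$ is regular; then $\exp(G')\mid\exp(G/Z(G))=p$ and a regular-$p$-group count $[G:G_{p}]=|\Omega(G)|$ eliminates exponent $p^{2}$. You should either reproduce this or supply an alternative.
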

	 Our  theorem  naturally extends the $\mathrm{NCS}$ classification of \cite{DM}, where one requires all \emph{proper normal subgroups} to be cyclic. Here the requirement is stronger, all \emph{proper characteristic subgroups} must be cyclic, and this has two broad consequences. On the one hand, the entire \lq\lq metacyclicpart/ Z-group'' already present in \cite{DM} survives essentially unchanged: the same presentations (semidirect products with explicit arithmetic constraints) reappear as a natural subcase of the  $\mathrm{CCS}$ framework, and the perfect case remains tied to central extensions with cyclic kernel. On the other hand, the greater characteristic rigidity forces genuinely new families that do not show up in the $\mathrm{NCS}$ taxonomy: in particular, extraspecial $p$-groups (and certain $2$-extraspecial central products) and uniform dihedral/dicyclic families, all stable under the full automorphism group. \\
	 
	   The study of $\mathrm{CCS}$ groups, as a natural extension of $\mathrm{NCS}$ groups, is not only interesting in its own right but, as we will see, provides a useful framework for investigating skew left braces, an algebraic structure introduced to study set-theoretic solutions of the Yang-Baxter equation. Formally a \emph{skew left brace} is defined as a triple $(B,+,\cdot)$ such that $(B,+)$ and $(B,\cdot)$ are groups, and the following compatibility condition holds for all $a,b,c \in B$:
	  \begin{equation} \label{compatibility}
	  	a \cdot (b+c) = a \cdot b - a + a \cdot c,
	  \end{equation}
	  where $-a$ denotes the inverse of $a$ in the group $(B,+)$. The group $(B,+)$ is referred to as the \emph{additive group} of the skew brace, while $(B,\cdot)$ is called the \emph{multiplicative group}. Skew left braces are a generalization of left braces introduced by Rump in \cite{Rump2007bis}, namely skew braces with abelian additive group. The study of this object can be traced back to Drinfeld (\cite{Drinfeld1992}), who suggested investigating set-theoretic solutions of the Yang--Baxter equation $(\mathrm{YBE})$, namely pairs $(X,r)$ where $X$ is a set and $r \colon X \times X \to X \times X$ is a bijective map satisfying
	  \[
	  (r \times \mathrm{Id}_X) \circ (\mathrm{Id}_X \times r) \circ (r \times \mathrm{Id}_X)
	  =
	  (\mathrm{Id}_X \times r) \circ (r \times \mathrm{Id}_X) \circ (\mathrm{Id}_X \times r),
	  \]
	  where $\circ$ denotes composition of maps. A solution $(X,r)$ is called \emph{involutive} if
	  \[
	  r \circ r = \mathrm{Id}_{X \times X}.
	  \]
	First, Rump  (\cite{Rump2007bis}) introduced the notion of a left brace to study involutive non-degenerate solutions, proving that every left brace gives rise to a solution of the Yang--Baxter equation. Subsequently, it was shown in \cite{Bachiller2016} that every involutive non-degenerate solution arises from a left brace. To handle non-involutive solutions, Guarnieri and Vendramin \cite{GV17} introduced skew left braces, and it was later shown \cite{Bachiller2018} that all non-degenerate solutions can be obtained from skew left braces. 
One line of research in the study of skew left braces is the investigation of the relationship between the additive and the multiplicative groups of a skew left brace. In this direction, one of the main conjectures—originally posed by Vendramin in \cite{Vendramin2019}—states that a finite skew left brace whose additive group is solvable must have a solvable multiplicative group. 

This conjecture is known to hold when the additive group is nilpotent (see \cite{Smoktunowicz2018}) and for several other classes of skew braces (see, for example, \cite{Gorshkov2021}, \cite{Nasybullov2019}, \cite{Byott2024}, \cite{Tsang2019} and in a recent preprint on Lie skew braces \cite{DL}. More generally, it is of significant interest to understand the structure of the multiplicative group of a skew brace in terms of group-theoretical properties of its additive group. Conversely, determining structural properties of the additive group $(B,+)$ of a skew brace $B$, given group-theoretic properties of $(B,\cdot)$, seems in general much more difficult. For example, it is known that if the multiplicative group of a skew brace $B$ is nilpotent, then the additive group $(B,+)$ is solvable. Moreover, if the multiplicative group $(B,\cdot)$ is cyclic, then the additive group $(B,+)$ is supersolvable, and if $(B,\cdot)$ is abelian, then $(B,+)$ is metabelian. (see, for instance, \cite[Theorem 1.3]{Tsang2019}. 
Another classic result is the following:

\begin{theorem} \label{Th2}
	Let $(B,+,\cdot)$ be a skew brace. If $(B,\cdot)$ is cyclic of order $p^n$ for some odd prime $p$, then $(B,+) \simeq (B,\cdot)$.
\end{theorem}

A proof of Theorem~\ref{Th2} is given in \cite{Kohl1998}, where the result is obtained via detailed computations. In this paper we present a different proof based on Theorem 
\ref{thm:main}.The case $p=2$ is settled in \cite{Byott2007}.

To conclude we give a general description of skew braces with non-nilpotent non-perfect $\mathrm{CCS}$ additive group:

\begin{theorem}\label{thm:skbr}
	Let $(B,+,\cdot)$ be skew brace such that $(B,+)$ is a $\mathrm{CCS}$ group, with $|B| = p^{\alpha}p_2^{\alpha_2}\cdots p_t^{\alpha_t} = p^\alpha m$, where $2<p<p_2<\cdots<p_t$ are primes, and $\alpha,\alpha_2,\dots,\alpha_t$ are non-negative integers. Suppose that $(B,+)$ is non-nilpotent and non-perfect. Then, one of the following holds.
	\begin{itemize}
		\item The group $(B,\cdot)$ is nilpotent, and $(B,\cdot) \cong C_m \times Q$, where $Q$ is a $p$-group with a maximal cyclic subgroup.
		
		\item The group $(B,\cdot)$ is non-nilpotent, and $(B,\cdot) \cong K \rtimes Q$, where $K$ is a $\mathrm{Z}$-group of order $m$, and $Q$ is a $p$-group with a maximal cyclic subgroup.
	\end{itemize} 
\end{theorem}

As a consequence, Vendramin's conjecture holds when the additive group is a solvable $\mathrm{CCS}$ group.
\begin{corollary}\label{cor:vendraminCCS}
Let $(B,+,\cdot)$ be a finite skew brace such that $(B,+)$ is a solvable $\mathrm{CCS}$ group. Then the multiplicative group $(B,\cdot)$ is solvable.
\end{corollary}

\medskip
The paper proceeds as follows. Sections~\ref{sec:1}-\ref{sec:3} establish the classification of $\mathrm{CCS}$ groups (nilpotent, solvable non-nilpotent, and perfect cases), while ~\ref{sec:skew} uses this classification to study skew braces and prove Theorems~\ref{Th2} and~\ref{thm:skbr}.

More precisely, the proof of Theorem~\ref{thm:main} is structured according to the standard trichotomy (nilpotent / solvable non-nilpotent / perfect). We begin with the nilpotent case: in Section~\ref{sec:1} we prove that a nilpotent $\mathrm{CCS}$ group must be a non-abelian $p$-group (Lemma~\ref{lemma:pgrp}), and we then classify $\mathrm{CCS}$ $p$-groups. This classification produces the families appearing in Theorem~\ref{thm:main}(i)--(iii), and it also accounts for the $p$-group instances of parts (iv)--(v), namely the dihedral and quaternion cases.
We then move to the solvable but non-nilpotent situation. In Section~\ref{sec:2} we deal with the case in which $G'<G$. The crucial structural ingredient here is Theorem~\ref{sec2:thm:1}; once this is established, we derive the metacyclic presentations listed in Theorem~\ref{thm:main}(vi)--(vii), and we also obtain the dihedral and dicyclic families appearing in (iv)--(v).
Finally, we treat the perfect case. In Section~\ref{sec:3} we consider groups with $G=G'$ and we prove Theorem~\ref{thm:main}(viii). To conclude, in Section~\ref{sec:skew} we apply Theorem~\ref{thm:main} to skew braces, obtaining in particular the proofs of Theorems~\ref{Th2} and~\ref{thm:skbr}.

	\subsection*{Acknowledgements}
	The authors would like to thank Andrea Loi and Vicent Pérez-Calabuig for their helpful discussions and insightful comments, which greatly improved this work.
	
	\section{Notation and preliminaries}
	In this brief section, we introduce the notation and preliminaries that will be used throughout the paper.\\
	Let $p$ be a prime number and let $G$ be a $p$-group of order $p^{n}$ for some positive integer $n$. Let $i \in \{1, \ldots n\}$. We denote with $$\Omega_i(G) = \langle g\in G \, : \, g^{p^i}=1\rangle,$$ and with $$\mho_i(G) = \langle g^{p^i} \, : \, g \in G \rangle.$$ If $i=1$, we write $\Omega(G) = \Omega_1(G)$ and $G_p = \mho_1(G)$.  \\
	Recall that an extraspecial $p$-group is a $p$-group $G$ with $Z(G) = G' = \Phi(G) = C_p$, where $G' = [G,G]$ is the derived subgroup of $G$, and $\Phi(G)$ is the Frattini subgroup of $G$. For each prime $p$ and each positive integer $n$, there are two classes of extraspecial groups, denoted with $p^{1+2n}_+$ and $p^{1+2n}_-$. In the former case, the exponent of the group is $p$, in the latter is $p^2$.\\
	A $p$-group $G$ is called regular if for every $g,h \in G$, there exists $k \in \langle g,h\rangle' $ such that
	\[
	g^ph^p = (gh)^pk^p.
	\]
	We recall here two easy properties about regular $p$-groups. This is standard material in the theory of $p$-groups. We refer the reader, for example, to~\cite[Chapter~$1$]{LGM}.
	\begin{lemma}\label{lemma:regularpgrp}
		Let $G$ be a $p$-group. 
		\begin{itemize}
			\item If the nilpotency class of $G$ is strictly less than $p$, then $G$ is regular.
			\item If $G$ is regular, then
			\[
			[G:G_p] = |\Omega(G)|.
			\]
		\end{itemize}
	\end{lemma}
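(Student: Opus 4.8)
The plan is to recognize both statements as classical results on regular $p$-groups, obtained from the Hall--Petrescu collection formula; the full details are in \cite[Chapter~1]{LGM}, so I will only indicate the mechanism and flag where the real work sits.

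For the first item, fix $g,h\in G$ and set $H=\langle g,h\rangle$. The Hall--Petrescu identity with exponent $p$ gives
\[
  g^{p}h^{p}=(gh)^{p}\,c_{2}^{\binom{p}{2}}c_{3}^{\binom{p}{3}}\cdots c_{p}^{\binom{p}{p}},\qquad c_{i}\in\gamma_{i}(H).
\]
Since the class of $G$ is less than $p$ we have $\gamma_{p}(H)\le\gamma_{p}(G)=1$, so $c_{p}=1$; and for $2\le i\le p-1$ we have $p\mid\binom{p}{i}$, say $\binom{p}{i}=p\,m_{i}$, so that $c_{i}^{\binom{p}{i}}=(c_{i}^{m_{i}})^{p}$ is the $p$-th power of an element of $\gamma_{i}(H)\le H'$. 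Hence $g^{p}h^{p}(gh)^{-p}$ is a product of $p$-th powers of elements of $H'$, and it remains to collapse such a product into a single $p$-th power $k^{p}$ with $k\in H'$; this is the routine induction on the nilpotency class, reapplying Hall--Petrescu inside $H'$ (whose class is again less than $p$) so that the new correction terms lie one step deeper in the lower central series, and iterating until that series terminates. The identity $g^{p}h^{p}=(gh)^{p}k^{p}$ so obtained is exactly the regularity condition.

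For the second item I would use the $p$-th power map $\pi\colon G\to G$, $g\mapsto g^{p}$. By the first part, $G$ being regular satisfies the standard consequences of regularity recorded in \cite[Chapter~1]{LGM}: the set of $p$-th powers is already the subgroup $\mho_{1}(G)=G_{p}$, so $\pi$ maps onto $G_{p}$; and the set $\{g\in G: g^{p}=1\}$ is already the subgroup $\Omega(G)$. The crux is then Hall's lemma that all non-empty fibres of $\pi$ have the same cardinality $|\Omega(G)|$ --- equivalently, that $\pi$ induces a bijection of sets $G/\Omega(G)\to G_{p}$ --- which is again squeezed out of the collection formula applied to two-generator subgroups. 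Granting it, a straightforward count of preimages gives $|G|=|G_{p}|\cdot|\Omega(G)|$, that is, $[G:G_{p}]=|\Omega(G)|$.

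The only real obstacles are the two bookkeeping points inherited from the collection formula: for the first item, merging a product of $p$-th powers into a single $p$-th power in a group of class less than $p$; for the second, the fibre-uniformity of the power map in a regular $p$-group. Both belong to the routine regular-$p$-group toolkit, and the cleanest exposition is to cite \cite[Chapter~1]{LGM} for them and keep here only the two displayed conclusions that are used later.
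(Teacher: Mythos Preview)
Your proposal is correct and in fact goes beyond what the paper does: the paper gives no proof at all for this lemma, treating both items as standard facts and simply referring the reader to \cite[Chapter~1]{LGM}. Your sketch via the Hall--Petrescu identity is exactly the classical route that reference follows, so the approaches coincide; the only difference is that you have unpacked the mechanism while the paper is content with the bare citation.
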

	Let now $G$ and $H$ be two groups and take $G_1 \leq Z(G)$ and $H_1 \leq Z(H)$. Let $\theta : G_1 \to H_1$ be an isomorphism. Then, we define the (external) central product of $G$ and $H$ as
	\[
	G \circ H = \frac{G\times H}{N},
	\]
	where $N = \{(g,h) \in G_1 \times H_1 \, | \, \theta(g) = h\}$. \\
	A group $G$ is said to be the internal central product of two subgroups $G_1$ and $G_2$ if $G = G_1G_2$, and $[G_1,G_2]=1$. 	It is easy to see that if $G$ is the external central product of $K$ and $H$, then $G$ is the internal central product of $G_1$ and $G_2$, where $G_1$ is the image of $K \times 1$ in the quotient group $(K \times H)/N$, where $N$ is defined as above, and $G_2$ is the image of $1 \times H$. Moreover $G_1 \cong K$ and $G_2 \cong H$. We refer the reader to~\cite{Go} for more details.\\

	We denote with $D_{2n}$ the dihedral group of order $2n$. Moreover, we denote with $Q_{2^n}$ the generalized quaternion group of order $2^n$, with presentation
	\[
	Q_{2^n} = \langle x , y \, | \, x^{2^{n-1}} = y^4 = 1, yxy^{-1}=x^{-1} \rangle.
	\]
	We then denote with $\mathrm{Dic}_{n}$ the dicyclic group of order $4n$. Recall that such a group has presentation
	\[
	\mathrm{Dic}_{n} = \langle a,b \, | \, a^{2n}=1, b^2=a^n, bab^{-1} = a^{-1} \rangle.
	\]
	Observe that if $n$ is a power of $2$, then $\mathrm{Dic}_n$ is a generalized quaternion group. Observe moreover that if $n=2^{k}m$ is even, the dicyclic group is the unique split extension of the cyclic group of order $m$ with the quaternion group of order $2^{k+2}$. \\
	Finally, we denote with $SD_{2^n}$ the semidihedral group, that is
	\[
	SD_{2^n} = \langle r, s \, | \, r^{2^{n-1}}=s^2=1, srs=r^{2^{n-2}-1}\rangle.
	\]
	\begin{lemma}\label{lemma:semidihedral}
		For any $n>1$, the group $SD_{2^n}$ is not a $\mathrm{CCS}$ group.
	\end{lemma}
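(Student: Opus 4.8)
The plan is to exhibit a \emph{proper characteristic subgroup of $G:=SD_{2^n}$ that is not cyclic}; by the definition of a $\mathrm{CCS}$ group this is all that is needed. The natural candidate is $\Omega_1(G)$, and I claim that $\Omega_1(G)=\langle r^2,s\rangle$, which is a dihedral group of order $2^{n-1}$ and hence non-cyclic (indeed non-abelian) as soon as $n\geq 4$. So the whole argument reduces to identifying $\Omega_1(G)$.

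To do this I would first pin down the involutions of $G$. From the defining relation $srs=r^{2^{n-2}-1}$ one gets $sr^{j}s=(srs)^{j}=r^{j(2^{n-2}-1)}$, and therefore $(r^{j}s)^2=r^{j}(sr^{j}s)=r^{j2^{n-2}}$, which equals $1$ exactly when $j$ is even. Thus $s$ and $r^2s$ are involutions, and so is the central element $z:=r^{2^{n-2}}$. Since $z$ is central, $\langle z,s\rangle\cong C_2\times C_2$, and this subgroup is contained in $\Omega_1(G)$, so $\Omega_1(G)$ is not cyclic. On the other hand $r$ has order $2^{n-1}\geq 4$, so $r\notin\Omega_1(G)$ and $\Omega_1(G)<G$. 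As $\Omega_1(G)$ is characteristic in $G$, this already shows $G$ is not a $\mathrm{CCS}$ group. If one wants the sharper description, the computation above shows that every involution of $G$ lies in $\langle r^2,s\rangle$ while $s(r^2s)=r^{-2}$ forces $r^2\in\Omega_1(G)$, whence $\Omega_1(G)=\langle r^2,s\rangle$; and since $sr^2s=(srs)^2=r^{-2}$, this subgroup is dihedral of order $2^{n-1}$. (Alternatively one could observe that the three maximal subgroups of $G$ are $\langle r\rangle\cong C_{2^{n-1}}$, $\langle r^2,s\rangle\cong D_{2^{n-1}}$ and $\langle r^2,rs\rangle\cong Q_{2^{n-1}}$, pairwise non-isomorphic and hence each characteristic.)

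The argument works verbatim for all $n\geq 3$; for the degenerate small values the presentation collapses to an abelian group ($SD_8\cong C_4\times C_2$, still non-cyclic with $\Omega_1\cong C_2\times C_2$, and $SD_4\cong C_2$, which is cyclic), so in every case $G$ is not a $\mathrm{CCS}$ group. I do not expect any genuine obstacle here: the proof is a short direct computation inside a well-understood $2$-group, and the only points needing a little care are the exponent bookkeeping modulo $2^{n-1}$ and the degenerate small values of $n$.
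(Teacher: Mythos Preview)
Your proof is correct, and your parenthetical alternative---that the three index-$2$ subgroups $\langle r\rangle$, $\langle r^2,s\rangle$, $\langle r^2,rs\rangle$ are pairwise non-isomorphic and hence each characteristic---is exactly the paper's argument. Your primary route via $\Omega_1(G)$ is a valid and slightly more direct variant: rather than enumerating all maximal subgroups and distinguishing them by isomorphism type, you exhibit one canonically characteristic non-cyclic proper subgroup. Both arguments rest on the same computation of $(r^{j}s)^2=r^{j\,2^{n-2}}$.

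One step needs tightening. The inference ``$r$ has order $2^{n-1}\geq 4$, so $r\notin\Omega_1(G)$'' is not valid as written: $\Omega_1(G)$ is the subgroup \emph{generated} by involutions, not the set of involutions, and it can contain elements of arbitrarily large $2$-power order (for instance $\Omega_1(D_8)=D_8$). The correct justification---which you do supply in the next sentence---is that every involution of $G$ lies in the proper subgroup $\langle r^2,s\rangle$, whence $\Omega_1(G)\leq\langle r^2,s\rangle<G$. Reorder the argument so that this containment is established before you assert $\Omega_1(G)<G$, and the proof is clean.
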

	\begin{proof}
		Let $G$ be a finite group and $H \leq G$ be a subgroup of index $2$. Then, $G^2 \leq H$ and therefore $H/G^2$ is a subgroup of index at most $2$ in $G/G^2$. This means that the number of subgroups of index $2$ in $G$ is at most the number of subgroups of index $2$ in $G/G^2$.\\
		Now take $G = SD_{2^n} = \langle r,s \rangle$. Then, $G^2 \geq \langle r^2,s^2 \rangle = \langle r^2 \rangle$, and therefore $|G^2| \geq 2^{n-2}$, so that $|G/G^2| \leq 4$. In particular, $G/G^2$ (and therefore $G$) has at most $3$ subgroups of index $2$. Now it is easy to see that $H_1 = \langle r \rangle$, $H_2 = \langle r^2,s \rangle$ and $H_3 = \langle r^2, rs \rangle$ are three different subgroups of index $2$ in $G$, and therefore these are the unique subgroups of index $2$ in $G$. Moreover, we have that $H_1 \cong C_{2^{n-1}}$, $H_2 \cong D_{2^{n-1}}$ and that $H_3 \cong Q_{2^{n-1}}$. Therefore, these are characteristic subgroups of $G$, so that $G$ is not a $\mathrm{CCS}$ group.
	\end{proof}
	
	We define a Frobenius group as a group $G$ admitting a normal subgroup $N$ with the property that, for every non trivial element $n$ of $N$, $\mathbf{C}_G(n)\leq N$. This is not the standard definition typically found in the literature on permutation groups, but it is an equivalent one. (see, for example, \cite[Theorem~$6.4$]{Is}). \\
	If $G$ is a Frobenius group, the subgroup $N$ with the property that $\mathbf{C}_G(n)\leq N$ for every $n \in N$ is called \emph{Frobenius kernel} of $G$. A well known theorem states that if $G$ is a Frobenius group with Frobenius kernel $N$, then $N$ admits a complement $A$, so that $ G = N \rtimes A$. The subgroup $A$ is called \emph{Frobenius complement}.\\
	We will use the following property of Frobenius group.
	\begin{proposition}\cite[Corollary~$6.10$]{Is}\label{prop:frob}
		Let $G$ be a Frobenius group. Then, the Sylow $p$-subgroups of the Frobenius complement of $G$ are either cyclic or (if $p=2$) generalized quaternion.
	\end{proposition}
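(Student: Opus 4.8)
The plan is to write $G = N \rtimes H$, with $N$ the Frobenius kernel and $H$ a Frobenius complement, and first to record the two standard facts we will need. For every $h \in H \setminus \{1\}$ one has $\mathbf{C}_N(h) = 1$: if $1 \neq n \in N$ commuted with $h$, then $h \in \mathbf{C}_G(n) \leq N$, forcing $h \in H \cap N = 1$. And $(|N|,|H|) = 1$: since $H$ acts on $N \setminus\{1\}$ with trivial point stabilizers, $|H|$ divides $|N|-1$. Next I would reduce the proposition to the single assertion that $H$ contains no subgroup isomorphic to $C_p \times C_p$. Indeed, a finite $p$-group containing no elementary abelian subgroup of rank $2$ is cyclic, or generalized quaternion when $p=2$ (see \cite{LGM} or \cite{Go}); applying this to a Sylow $p$-subgroup of $H$ then yields exactly the stated conclusion (and for primes not dividing $|H|$ there is nothing to check).

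The core is therefore to suppose $E \le H$ with $E \cong C_p \times C_p$ and reach a contradiction. Any $a \in E$ of order $p$ acts on $N$ by conjugation as a fixed-point-free automorphism of prime order, so Thompson's theorem (a finite group admitting a fixed-point-free automorphism of prime order is nilpotent) gives that $N$ is nilpotent. Fixing a prime $q \mid |N|$ (necessarily $q \neq p$, by coprimality) and letting $Q$ be the Sylow $q$-subgroup of $N$, the group $W := \Omega_1(Z(Q))$ is a non-trivial elementary abelian $q$-group, characteristic in $N$ and hence $E$-invariant, with $\mathbf{C}_W(a) \le \mathbf{C}_N(a) = 1$ for all $a \in E \setminus \{1\}$; so $E$ acts fixed-point-freely on $W$. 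Viewing $W$ as an $\mathbb{F}_q E$-module and using $q \nmid |E|$, Maschke's theorem makes it semisimple, so pick a non-zero irreducible summand $V$. Over $\overline{\mathbb{F}_q}$ the module $V$ splits as a sum of one-dimensional modules afforded by Galois-conjugate linear characters $\chi \colon E \to \overline{\mathbb{F}_q}^{\times}$. The image of $\chi$ is a finite, hence cyclic, subgroup of $\overline{\mathbb{F}_q}^{\times}$ and is a quotient of $E \cong C_p \times C_p$, so it has order $1$ or $p$; thus $\ker \chi \neq 1$. Choosing $1 \neq a \in \ker \chi$ and using that all Galois conjugates of $\chi$ share the same kernel, $a$ acts trivially on $V \otimes \overline{\mathbb{F}_q}$, hence on $V$, so $\mathbf{C}_W(a) \supseteq V \neq 0$ — contradicting fixed-point-freeness. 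Hence $H$ has no $C_p \times C_p$, and the proposition follows.

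I expect the only genuinely non-elementary input to be Thompson's theorem, which is what allows us to replace $N$ by an elementary abelian section on which $E$ still acts fixed-point-freely; this is in effect the nilpotency of the Frobenius kernel, the one deep fact in the elementary theory of Frobenius groups, and I regard it as the real crux. Everything after that reduction — the coprime-action bookkeeping and the character computation for $C_p \times C_p$ — is soft, and the reduction of the Sylow statement to the non-existence of a rank-two elementary abelian subgroup is a classical $p$-group fact.
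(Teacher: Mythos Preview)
The paper does not give its own proof of this proposition; it is simply quoted from Isaacs, so there is no in-paper argument to compare against. Your proof is correct as written: the reduction to ``$H$ contains no $C_p\times C_p$'' via the classical $p$-group fact, and the character computation over $\overline{\mathbb{F}_q}$ (including the observation that Galois-conjugate characters share a kernel), are both sound.

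That said, the step you single out as ``the real crux'' --- Thompson's theorem --- is not actually needed, and removing it makes the argument fully elementary. You invoke nilpotency of $N$ only to produce an $E$-invariant Sylow $q$-subgroup $Q\le N$, but this is available for free: $E$ is a $p$-group acting on $\mathrm{Syl}_q(N)$, and $|\mathrm{Syl}_q(N)|$ divides $|N|$, which is coprime to $p$ since $(|N|,|H|)=1$. Orbit counting then forces a fixed point, i.e.\ an $E$-invariant $Q$, and $W=\Omega_1(Z(Q))$ works exactly as in your proof. With this one-line replacement your argument no longer depends on any deep theorem, which is also how the cited reference proceeds. So your route is valid but heavier than necessary; the lighter version keeps the proposition logically prior to, rather than dependent on, the nilpotency of Frobenius kernels.
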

	
	We need two basic facts about supersolvable groups. 	Recall that a group $G$ is said to be supersolvable if it has a normal series with cyclic quotient, that is a series
	\[
	1 = G_0  \trianglelefteq G_1 \trianglelefteq G_2 \trianglelefteq \dots \trianglelefteq G_{n-1} \trianglelefteq G_n = G,
	\]
	where $G_i \trianglelefteq G$ and $G_i / G_{i-1}$ is cyclic, for all $i=1,\dots,n$.
	\begin{lemma}\cite[Lemma~$2.17$]{A}\label{lemmasupersolv1}
		Let $G$ be a finite group with $G'$ cyclic, where $G' = [G,G]$ is the commutator subgroup. Then $G$ is supersolvable.
	\end{lemma}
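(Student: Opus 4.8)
The plan is to exhibit an explicit normal series of $G$ with cyclic factors, obtained by splicing a series running through $G'$ with a series running through $G/G'$. The one elementary observation that makes everything work is that every subgroup of a cyclic group is characteristic in it, and a characteristic subgroup of a normal subgroup is normal in the whole group; this is precisely what upgrades ``subnormal'' to ``normal in $G$'', which is what the definition of supersolvability demands.

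First I would treat the bottom part, inside $G'$. Since $G'$ is cyclic and normal in $G$, writing $|G'| = p_1^{a_1}\cdots p_r^{a_r}$ we may take the chain $1 = N_0 < N_1 < \cdots < N_k = G'$ obtained by successively adjoining the prime-order steps within each cyclic primary component, so that each quotient $N_i/N_{i-1}$ has prime order. Each $N_i$ is characteristic in $G'$, hence normal in $G$, and each factor is cyclic. Next I would treat the top part, through $G/G'$: the quotient $\overline G = G/G'$ is a finite abelian group, so it has a composition series $\overline M_0 = 1 < \overline M_1 < \cdots < \overline M_s = \overline G$ with factors of prime order, and, $\overline G$ being abelian, each $\overline M_j \trianglelefteq \overline G$. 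Pulling back along $G \to \overline G$ yields subgroups $G' = M_0 \le M_1 \le \cdots \le M_s = G$ with each $M_j \trianglelefteq G$ and $M_j/M_{j-1} \cong \overline M_j/\overline M_{j-1}$ cyclic.

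Concatenating the two series then gives
\[
1 = N_0 \trianglelefteq N_1 \trianglelefteq \cdots \trianglelefteq N_k = G' = M_0 \trianglelefteq M_1 \trianglelefteq \cdots \trianglelefteq M_s = G,
\]
a series in which every term is normal in $G$ and every factor is cyclic, so $G$ is supersolvable by the definition recalled above. I do not expect a serious obstacle: the only point that needs genuine care is checking that the terms coming from inside $G'$ are normal in all of $G$ and not merely in $G'$, which is exactly where the characteristic-subgroup observation is invoked; the rest is routine bookkeeping with series.
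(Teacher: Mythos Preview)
Your argument is correct. The key point---that every subgroup of the cyclic group $G'$ is characteristic in $G'$ and hence normal in $G$---is exactly what is needed to promote the obvious subnormal series inside $G'$ to a normal series of $G$, and the abelian quotient $G/G'$ takes care of the top. Splicing the two pieces gives a normal series with cyclic factors, which is the definition of supersolvability used in the paper.

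As for comparison: the paper does not prove this lemma at all; it is simply quoted from \cite[Lemma~$2.17$]{A}. Your write-up therefore supplies a short, self-contained proof where the paper only gives a citation. The argument you give is the standard one and is essentially the same as what one finds in the reference, so nothing is lost or gained beyond making the paper more self-contained.
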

	\begin{lemma}\cite[Theorem~$4.24$]{C}\label{lemmasupersolv2}
		Let $G$ be a finite supersolvable group, and let $p$ be the smallest prime dividing the order of $G$. Then, the elements of order prime to $p$ form a normal $\pi$-Hall subgroup of $G$, where $\pi$ is the set of primes dividing the order of $G$ different from $p$.
	\end{lemma}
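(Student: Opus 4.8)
The plan is to deduce the statement from the existence of a normal $p$-complement in $G$, and to argue by induction on $|G|$. First I observe that the conclusion is equivalent to the assertion that $G$ has a normal subgroup $H$ whose order equals $|G|_{p'}$, the $p'$-part of $|G|$ (equivalently, $|G:H|$ is a power of $p$). Indeed, given such an $H$, every element of order prime to $p$ maps to the identity of the $p$-group $G/H$, hence lies in $H$, while $H$, being a $p'$-group, consists only of elements of order prime to $p$; thus $H$ is precisely the set of $p'$-elements of $G$, it is a normal $\pi$-Hall subgroup, and (being the set of $p'$-elements) it is the unique normal $p$-complement and in particular characteristic in $G$. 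So it suffices to produce a normal $p$-complement.

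I then induct on $|G|$, the case in which $G$ is a $p$-group (with $H=1$) being trivial. Being supersolvable, $G$ has a minimal normal subgroup $N$, and $N$ is cyclic of prime order $q$: taking a normal series $1=G_0\trianglelefteq\cdots\trianglelefteq G_n=G$ with each $G_j\trianglelefteq G$ and $G_j/G_{j-1}$ cyclic, and letting $i$ be least with $N\cap G_i\neq 1$, minimality forces $N\leq G_i$ and $N\cap G_{i-1}=1$, so $N$ embeds in the cyclic group $G_i/G_{i-1}$; a cyclic minimal normal subgroup has prime order, since otherwise its unique subgroup of prime order would be a proper nontrivial normal subgroup of $G$. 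If $q\neq p$, I pass to $\bar G=G/N$, which is supersolvable of smaller order and still has $p$ as its least prime; by induction it has a normal $p$-complement $K/N$ with $K\trianglelefteq G$, and since $q\neq p$ a short order count gives $|K|=q\cdot|\bar G|_{p'}=|G|_{p'}$, so $K$ is a normal $p'$-subgroup of $p$-power index, i.e.\ a normal $p$-complement of $G$.

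The essential case is $q=p$. Here I first exploit the minimality of $p$: conjugation on $N$ yields a homomorphism $G\to\mathrm{Aut}(N)\cong C_{p-1}$, and every prime dividing $p-1$ is strictly smaller than $p$, hence coprime to $|G|$; the map is therefore trivial and $N\leq Z(G)$. Now apply the inductive hypothesis to $\bar G=G/N$ to obtain a normal $p$-complement $K/N$, with $K\trianglelefteq G$. If $K<G$, then $K$ is supersolvable, of smaller order, and still has least prime $p$, so by induction $K$ has a normal $p$-complement $H$; this $H$ is the unique normal $p$-complement of $K$, hence characteristic in $K$ and so normal in $G$, and an order count (using $|K|=p\cdot|G|_{p'}$) shows $H$ is a normal $p$-complement of $G$. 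If instead $K=G$, then $\bar G$ is a $p'$-group, so $N$ is a central Sylow $p$-subgroup of $G$; the extension $1\to N\to G\to\bar G\to 1$ has quotient of order coprime to $|N|$, so it splits by Schur--Zassenhaus, and since $N$ is central we get $G=N\times H$ with $H$ a normal $p$-complement. This completes the induction.

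The only delicate point is the case $q=p$: the argument hinges on the observation that a minimal normal subgroup isomorphic to $C_p$ is central when $p$ is the least prime dividing $|G|$, and this is exactly where the hypothesis on $p$ (rather than an arbitrary prime) is essential — the conclusion already fails for $S_3$ with $p=3$. As an alternative one could quote the classical fact that a finite supersolvable group admits a Sylow tower with the Sylow subgroups arranged in order of decreasing prime, the term of index a power of the least prime being the desired normal $p$-complement; the inductive argument above is self-contained modulo Schur--Zassenhaus.
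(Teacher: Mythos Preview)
Your proof is correct. The paper does not supply its own proof of this lemma; it simply cites \cite[Theorem~4.24]{C} and moves on, so there is nothing to compare against beyond noting that your self-contained inductive argument (minimal normal subgroup of prime order, centrality when $q=p$ via $|\mathrm{Aut}(C_p)|=p-1$, then Schur--Zassenhaus or a further induction on $K$) is a standard route to the normal $p$-complement and is essentially what lies behind the cited reference.
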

	
	Finally, we present an easy property of $\mathrm{CCS}$ groups. 	Observe that the property of $\mathrm{CCS}$ group usually does not pass to subgroup. The smallest example showing this is the extraspecial group $2^5_+$, which has a subgroup of shape $C_4 \times C_2$. However, the following holds.
	\begin{lemma}\label{lemma:charQuot}
		Let $G$ be a finite group and let $N$ be a characteristic subgroup of $G$. Let $H/N$ be a characteristic subgroup of $G/N$. Then, $H$ is a characteristic subgroup of $G$.
	\end{lemma}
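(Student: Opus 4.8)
The plan is to argue directly from the definition of a characteristic subgroup. Let $\varphi \in \operatorname{Aut}(G)$ be arbitrary. Since $N$ is characteristic in $G$, we have $\varphi(N) = N$, so $\varphi$ induces a well-defined map $\bar\varphi \colon G/N \to G/N$ given by $\bar\varphi(gN) = \varphi(g)N$. The first step is to check that $\bar\varphi$ is an automorphism of $G/N$: it is a homomorphism because $\varphi$ is, it is surjective because $\varphi$ is, and it is injective because $\varphi(g) \in N$ forces $g \in \varphi^{-1}(N) = N$ (again using that $N$ is characteristic, applied to $\varphi^{-1}$). Alternatively, one may simply note that $\varphi \mapsto \bar\varphi$ is a homomorphism $\operatorname{Aut}(G) \to \operatorname{Aut}(G/N)$, so $\bar\varphi$ is invertible with inverse $\overline{\varphi^{-1}}$.

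The second step is to apply the hypothesis that $H/N$ is characteristic in $G/N$: this gives $\bar\varphi(H/N) = H/N$. Unwinding the definition of $\bar\varphi$, for every $h \in H$ we get $\varphi(h)N = \bar\varphi(hN) \in H/N$, hence $\varphi(h) \in H$. Thus $\varphi(H) \subseteq H$. Finally, applying the same reasoning to $\varphi^{-1}$ yields $\varphi^{-1}(H) \subseteq H$, i.e. $H \subseteq \varphi(H)$, so $\varphi(H) = H$. Since $\varphi$ was an arbitrary automorphism of $G$, this shows $H$ is characteristic in $G$.

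There is essentially no obstacle here; the only point requiring a line of care is the verification that $\bar\varphi$ is genuinely an automorphism of $G/N$ (rather than merely an endomorphism), which is exactly where the characteristicity of $N$ — as opposed to mere normality — is used. Everything else is a routine diagram chase, and since $G$ is finite one could even skip the $\varphi^{-1}$ argument by noting that an injective endomorphism of a finite group is an automorphism.
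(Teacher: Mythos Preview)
Your proof is correct and follows essentially the same approach as the paper's own proof: take an arbitrary automorphism $\varphi$ of $G$, use the characteristicity of $N$ to get an induced automorphism of $G/N$, then apply the hypothesis on $H/N$. The paper's version is simply terser, omitting the routine verifications you spelled out.
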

	\begin{proof}
		Let $\varphi \in \mathrm{Aut}(G)$, and $\psi : G/N \to G/N$ defined by $\psi(xN) = \varphi(x)N$. This is an automorphism of $G/N$, since $N$ is characteristic in $G$, and thus  $\psi(H/N) = H/N$, implying that $H$ is characteristic in $G$.
	\end{proof}
	\begin{lemma}\label{lemma:quot}
		Let $G$ be a $\mathrm{CCS}$ group and let $N$ be a characteristic subgroup of $G$. Then, $G/N$ is either characteristically simple, or is $\mathrm{CCS}$ group.
	\end{lemma}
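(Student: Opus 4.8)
The plan is to obtain this almost directly from Lemma~\ref{lemma:charQuot}. First I would dispose of the degenerate cases: if $N=G$ then $G/N$ is trivial, hence characteristically simple, and if $N=1$ then $G/N=G$ is itself a CCS group. So I may assume $1\neq N\neq G$, so that $N$ is a proper characteristic subgroup of $G$ and hence cyclic.

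Next I would check that every proper characteristic subgroup of $G/N$ is cyclic. Let $H/N$ be such a subgroup, so that $N\le H<G$. By Lemma~\ref{lemma:charQuot}, $H$ is characteristic in $G$, and since $H\neq G$ it is a \emph{proper} characteristic subgroup of $G$; as $G$ is a CCS group, $H$ is cyclic, hence so is $H/N$.

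It then remains only to unwind the definitions: $G/N$ is a finite group whose proper characteristic subgroups are all cyclic, so either it has no proper nontrivial characteristic subgroup, in which case it is characteristically simple, or it does, in which case --- being not characteristically simple and having all its proper characteristic subgroups cyclic --- it is a CCS group provided it is non-cyclic. (One should also allow $G/N$ to be cyclic: a cyclic group of composite order is neither characteristically simple nor a CCS group, as the example $\mathrm{Dic}_3/C_3\cong C_4$ shows.) There is no real obstacle here; the whole content of the statement is the pull-back property of characteristic subgroups provided by Lemma~\ref{lemma:charQuot}, and the remainder is just a check against the definition of a CCS group.
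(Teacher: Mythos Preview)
Your approach is exactly the paper's: the paper's entire proof is the single sentence ``It follows from Lemma~\ref{lemma:charQuot}.'' Your closing observation is also correct and worth recording: as stated, the dichotomy is incomplete, since $G/N$ may be cyclic of non-prime order. Your example is valid --- $\mathrm{Dic}_3$ is a CCS group, $C_3$ is its unique Sylow $3$-subgroup and hence characteristic, and $\mathrm{Dic}_3/C_3\cong C_4$ is neither characteristically simple nor (being cyclic) a CCS group. The paper's subsequent uses of the lemma are unaffected in context, but the statement should really read ``$G/N$ is cyclic, or characteristically simple, or a CCS group.''
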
 
	\begin{proof}
		It follows from Lemma~\ref{lemma:charQuot}.
	\end{proof}

	\section{Nilpotent $\mathrm{CCS}$ groups}\label{sec:1}
	In this section, we deal with nilpotent $\mathrm{CCS}$ groups. 
	\begin{lemma}\label{lemma:pgrp}
		Let $G$ be a nilpotent $\mathrm{CCS}$ group. Then, $G$ is a non-abelian $p$-group, for some prime $p$.
	\end{lemma}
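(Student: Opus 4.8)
The plan is to combine the structure theorem for finite nilpotent groups with a direct analysis of the abelian case.

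First I would use that a finite nilpotent group is the direct product of its Sylow subgroups, so $G = P_1 \times \dots \times P_r$ with each $P_i$ characteristic in $G$. If $r \geq 2$, then every $P_i$ is a \emph{proper} characteristic subgroup of $G$, hence cyclic by the $\mathrm{CCS}$ hypothesis; since the $P_i$ have pairwise coprime orders, $G$ would be cyclic, contradicting the definition of a $\mathrm{CCS}$ group. Hence $r = 1$ and $G$ is a $p$-group for some prime $p$.

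Next I would rule out the abelian case. Suppose $G$ is an abelian $p$-group. Being non-cyclic, $G$ has rank at least $2$, so $\Omega_1(G)$ is elementary abelian of rank at least $2$, in particular non-cyclic; and $\Omega_1(G)$ is characteristic in $G$. If $G$ is not elementary abelian, then $\Omega_1(G)$ is a proper non-cyclic characteristic subgroup, contradicting the $\mathrm{CCS}$ property. If instead $G$ is elementary abelian of rank $n \geq 2$, then $\mathrm{Aut}(G) \cong \mathrm{GL}_n(\mathbb{F}_p)$ acts transitively on the nonzero elements of $G$, so $G$ has no proper nontrivial characteristic subgroup; that is, $G$ is characteristically simple, which is excluded by the definition of a $\mathrm{CCS}$ group. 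In either case we reach a contradiction, so $G$ is non-abelian, completing the proof.

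The only mildly delicate point is the abelian case, and in particular the need to treat elementary abelian groups separately: the argument via $\Omega_1(G)$ works precisely because $\Omega_1(G) < G$ fails only for elementary abelian groups, and those are ruled out at the outset by the "non-characteristically simple" clause in the definition of a $\mathrm{CCS}$ group. Everything else is routine.
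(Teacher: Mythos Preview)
Your proof is correct and follows essentially the same approach as the paper: first use the Sylow decomposition of a nilpotent group to force $G$ to be a $p$-group, then use $\Omega_1(G)$ to rule out the abelian case, with elementary abelian groups excluded as characteristically simple. The only difference is that you spell out explicitly (via the transitive $\mathrm{GL}_n(\mathbb{F}_p)$-action) why elementary abelian groups are characteristically simple, whereas the paper leaves this implicit.
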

	\begin{proof}
		Suppose that $G$ is not a $p$-group. Then, $G$ is the direct product of its Sylow subgroups, and each of these is characteristic in $G$. In particular, each Sylow subgroup is cyclic, so that $G$ itself is cyclic, a contradiction. Thus, $G$ is a non-cyclic $p$-group. Suppose that $G$ is abelian. Since $G$ is not cyclic, $\Omega(G)$ is a non-cyclic proper characteristic subgroup of $G$, implying that $\Omega(G) = G$, and hence $G$ is an elementary abelian $p$-group, again a contradiction. 
	\end{proof}
	The remaining of this section is devoted to classify non-abelian $\mathrm{CCS}$ $p$-groups. We will distinguish the case where $p$ is odd or $2$.\\
	We begin by assuming that $p$ is odd.
	\begin{proposition}
		Let $G$ be a non-abelian $p$-group, with $p>2$. Then, $G$ is a $\mathrm{CCS}$ group if, and only if, $G \cong p^{1+2n}_+$, for some $n \in \mathbb{N}$.
	\end{proposition}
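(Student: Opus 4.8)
The plan is to prove the two implications separately, doing the forward direction (a non-abelian $p$-group, $p$ odd, which is CCS must be extraspecial of exponent $p$) by induction on $|G|$.

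For the converse I would argue directly. If $G\cong p^{1+2n}_+$, then $G$ is non-abelian, hence non-cyclic, and $Z(G)\cong C_p$ is a proper nontrivial characteristic subgroup, so $G$ is not characteristically simple. Given a characteristic subgroup $1\neq M<G$, I would use that a nontrivial normal subgroup of a $p$-group meets the centre, together with $|Z(G)|=p$, to conclude $Z(G)\leq M$; then $M/Z(G)$ is a subspace of $V:=G/Z(G)\cong\mathbb{F}_p^{2n}$ invariant under the subgroup of $\mathrm{GL}(V)$ induced by $\mathrm{Aut}(G)$. Since the standard description of $\mathrm{Aut}(p^{1+2n}_+)$ shows this subgroup contains $\mathrm{Sp}(2n,p)$, which is transitive on nonzero vectors and hence acts irreducibly, $M/Z(G)$ is either $0$ or all of $V$; as $M<G$, this forces $M=Z(G)$, which is cyclic.

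For the forward direction, let $G$ be a non-abelian CCS $p$-group with $p$ odd, and assume the statement for all groups of smaller order. The subgroups $Z(G)$, $G'$, $\Phi(G)$ are proper ($G$ being nilpotent and non-abelian) and characteristic, hence cyclic. First I would show $\Phi(G)=G'$: by Lemma~\ref{lemma:charQuot} the preimage of $\Omega_1(G/G')$ in $G$ is characteristic and surjects onto $\Omega_1(G/G')$, which is non-cyclic because $G/G'$ is a non-cyclic abelian $p$-group; since a non-cyclic characteristic subgroup cannot be proper, this preimage is all of $G$, i.e.\ $G/G'$ is elementary abelian and $\Phi(G)=G'$. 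Then I set $N=\Omega_1(G')$, a characteristic subgroup of order $p$, hence central (a normal subgroup of order $p$ in a $p$-group lies in the centre), and split according to Lemma~\ref{lemma:quot}: $G/N$ is characteristically simple or a CCS group. If $G/N$ is characteristically simple it is elementary abelian, so $G'\leq N$ and thus $G'=N$ has order $p$; hence $G$ has class $2$, and since $p$ is odd $x\mapsto x^p$ is a homomorphism $G\to G$ with image in $\Phi(G)=N$. If that image equalled $N$, then $\Omega_1(G)$ would be characteristic of index $p$, hence proper and cyclic; but a cyclic group generated by elements of order at most $p$ has order at most $p$, giving $|G|\leq p^2$, a contradiction. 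So $G$ has exponent $p$, the cyclic group $Z(G)$ then has exponent $p$ and equals $N$, and $G$ is extraspecial of exponent $p$; since the commutator map makes $G/Z(G)$ a non-degenerate alternating space, $|G|=p^{1+2n}$ and $G\cong p^{1+2n}_+$.

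It remains to eliminate the case where $G/N$ is a CCS group. I would first note it is non-abelian — no abelian group is CCS, since $\Omega_1$ would be a proper non-cyclic characteristic subgroup unless the group is characteristically simple — so by induction $G/N\cong p^{1+2m}_+$ with $m\geq 1$, whence $|G|=p^{2m+2}$. From $\Phi(G)N/N=\Phi(G/N)=(G/N)'\cong C_p$ and $\Phi(G)=G'$ one gets $G'N/N\cong C_p$; if $N\not\leq G'$ then $G'N\cong C_p\times C_p$ is a proper non-cyclic characteristic subgroup, a contradiction, so $N\leq G'$ and $G'=\Phi(G)\cong C_{p^2}$. Since $G/N$ has exponent $p$ we have $\mho_1(G)\leq N$; as $G'\cong C_{p^2}\leq G$, the group $G$ cannot have exponent $p$, so it has an element of order $p^2$ and $\Omega_1(G)\neq G$. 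Then $\Omega_1(G)$ is a proper characteristic subgroup, hence cyclic, hence of order $p$, so $G$ has a unique subgroup of order $p$ and (being an odd $p$-group) is cyclic, contradicting non-abelianness. I expect this last case to be the main obstacle: the argument only works because three pieces fit together — the numerical constraint $|G|=p^{2m+2}$ from the induction, the exponent-$p$ property of the extraspecial quotient, and the classical fact that an odd $p$-group with a unique subgroup of order $p$ is cyclic — and getting the interplay of $N$, $\Phi(G)=G'$, and $\Omega_1(G)$ right is the delicate part; everything else is either routine or quotable.
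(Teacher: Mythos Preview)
Your inductive approach is genuinely different from the paper's, which argues directly: it quotes \cite{FJO} to obtain $\Phi(G)\le Z(G)$, then \cite{AKT} to get $\exp(G')\mid\exp(G/Z(G))=p$, and finally uses regularity (class $2<p$) to force $\exp(G)=p$. Your route avoids these external citations and is more self-contained; the converse via the irreducible $\mathrm{Sp}(2n,p)$-action on $G/Z(G)$ is also more explicit than the paper's one-line appeal to a known fact.

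There is, however, a real gap in your elimination of Case~2. You assert that because $G$ contains an element of order $p^{2}$ (a generator of $G'\cong C_{p^{2}}$), we have $\Omega_1(G)\ne G$. But $\Omega_1(G)$ is the subgroup \emph{generated} by elements of order dividing $p$, not the set of such elements; a $p$-group can be so generated and still contain elements of order $p^{2}$ (for $p=3$, take $C_3\wr C_3$, which is generated by two elements of order $3$ yet has exponent $9$). What you need is that $G$ is regular, so that $\Omega_1(G)=\{g:g^{p}=1\}$; at this stage you only know the class is at most $3$, and for $p=3$ the class-less-than-$p$ criterion does not apply. The fix is short and uses the CCS hypothesis once more: $C_G(G')$ is characteristic, so if proper it is cyclic; since $G'\le C_G(G')$ and $C_G(G')/N$ lies in the exponent-$p$ group $G/N$, cyclicity forces $|C_G(G')|\le p^{2}=|G'|$, hence $C_G(G')=G'$ and $G/G'$ embeds in the $p$-part of $\mathrm{Aut}(C_{p^{2}})$, which has order $p$, contradicting $|G/G'|=p^{2m}\ge p^{2}$. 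Thus $G'\le Z(G)$, the class is $2<p$, $G$ is regular, and your $\Omega_1$ argument then goes through.
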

	\begin{proof}
		Suppose that $G$ is a $\mathrm{CCS}$ group.	The Frattini subgroup $\Phi(G)$ is a characteristic subgroup, and then it is cyclic. From~\cite[Theorem~$2.4$]{FJO}, we see that $\Phi(G) \leq Z(G)$. Since $G/\Phi(G)$ is elementary abelian, we have that $G' \leq \Phi(G) \leq Z(G)$, and then,
		\[
		\frac{G}{Z(G)} \cong \frac{G/\Phi(G)}{\Phi(G)/Z(G)}.
		\]
		In particular, $G/Z(G)$ is elementary abelian, and thus it has exponent $p$. We now show that $G' \cong C_p$. Since $G' \leq Z(G)$, the nilpotency class of $G$ is $2<p$.  \cite[Theorem~$2.4$]{AKT} shows that 
		\[
		\exp(G') | \exp\left(\frac{G}{Z(G)}\right) = p.
		\]
		Therefore, $\exp(G')=p$ and $G' = C_p$. Observe now that $G/G'$ is an abelian $\mathrm{CCS}$ group, by Lemma~\ref{lemma:quot}. Therefore, $G/G'$ is elementary abelian, so that
		\[
		\exp(G) | \exp(G')\exp \left(\frac{G}{G'}\right) = p^2.
		\]
		Aiming for a contradiction, suppose that $\exp(G) = p^2$. Observe that $G$ is a regular $p$-group, since its nilpotency class is less then $p$, and thus
		\[
		[G:G_p] = |\Omega(G)|,		
		\]
		by Lemma~\ref{lemma:regularpgrp}. Take now $g \in G_p$. Then, $g = x_1^p \cdots x_k^p$, for some $x_1,\dots,x_k \in G$. However, $G_p$ is cyclic. In particular, $g^p = x_1^{p^2}\cdots x_k^{p^2} = 1$. This means that $G_p$ is a cyclic $p$-group of exponent $p$, and thus $G_p \cong C_p$. This means that $\Omega(G)$ is a maximal subgroup of $G$. Moreover, if $g \in G'$, then $g^p = 1$, leading to $G' \leq \Omega(G)$. If $G' = \Omega(G)$, then $|G| = p^2$, since $|G'|=p$ and $\Omega(G)$ is maximal. But this is impossible, since $G$ is not abelian. Then, $G' < \Omega(G)$, and $\Omega(G)/G'$ is a cyclic subgroup of $G/G'$, which is an elementary abelian group. Thus,
		\[
		\frac{\Omega(G)}{G'}\cong C_p.
		\]
		In particular, $|\Omega(G)| = p^2$ and $|G| = p^3$. In particular, $G$ is the extraspecial group $p^3_-$, and it is easy to see that this has a characteristic subgroup of shape $C_p \times C_p$, a contradiction. Therefore, $\exp(G) = p$. In particular, since $\exp(Z(G)) | \exp(G)$, we have that $Z(G) = C_p$, and therefore $G$ is an extraspecial group of exponent $p$, as claimed. Assume now that $G = p^{1+2n}_+$. It is a known fact that $G$ admits a unique proper characteristic subgroup, which is its center, and this is cyclic by definition.
	\end{proof}
	Assume now that $p=2$. Here, we do not still have that $\Phi(G) \leq Z(G)$. In particular, we need to consider the two cases separately.
	\begin{proposition}
		Let $G$ be a non-abelian $2$-group with $\Phi(G) \leq Z(G)$. Then, $G$ is a $\mathrm{CCS}$ group if, and only if, one of the following occurs.
		\begin{itemize}
			\item $G \cong 2^{1+2n}_\pm$.
			\item $G \cong 2^{1+2n}_+ \circ C_4$.
		\end{itemize}
	\end{proposition}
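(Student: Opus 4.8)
The plan is to run the same scheme as for odd $p$: first use $\Phi(G)\le Z(G)$ to force nilpotency class $2$ and $|G'|=2$, then split according to $|Z(G)|$. Since $G/\Phi(G)$ is elementary abelian we get $G'\le\Phi(G)\le Z(G)$, so $G$ has class exactly $2$. As $G$ is non-abelian and non-cyclic, $Z(G)$, $\Phi(G)$ and $G'$ are proper characteristic subgroups, hence cyclic. By Lemma~\ref{lemma:quot} the quotient $G/G'$ is characteristically simple or a $\mathrm{CCS}$ group; being an abelian $2$-group it cannot be a (nilpotent) $\mathrm{CCS}$ group by Lemma~\ref{lemma:pgrp}, so it is elementary abelian, whence $\mho_1(G)\le G'$ and $\Phi(G)=\mho_1(G)G'=G'$. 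In a group of class $2$ one has $[x,y]^n=[x^n,y]$, so $\exp(G')$ divides $\exp(G/Z(G))$ (take $n=\exp(G/Z(G))$: every commutator, hence every element of the abelian group $G'$, has order dividing $n$); and $G/Z(G)$ is a quotient of the elementary abelian group $G/\Phi(G)$, so $\exp(G/Z(G))=2$ and therefore $G'\cong C_2$. Finally $Z(G)$ is cyclic with $\mho_1(Z(G))\le G'\cong C_2$, which forces $|Z(G)|\le 4$.

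If $Z(G)\cong C_2$ then $Z(G)=\Phi(G)=G'\cong C_2$, so $G$ is extraspecial and $G\cong 2^{1+2n}_{\pm}$. Suppose now $Z(G)\cong C_4$, say $Z(G)=\langle c\rangle$, and set $V=G/\Phi(G)$, viewed as an $\mathbb{F}_2$-space. Because $\Phi(G)$ is central of order $2$, the squaring map descends to a quadratic form $q\colon V\to\Phi(G)\cong\mathbb{F}_2$, $q(\bar x)=x^2$, with polar form $b(\bar x,\bar y)=[x,y]$. The radical of $b$ is $V_0:=Z(G)/\Phi(G)$, one-dimensional, and $q$ is non-zero on $V_0$ since $c^2\ne 1$. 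Picking a vector-space complement $W$ to $V_0$, the restriction $b|_W$ is a non-degenerate alternating form, so $\dim W=2n$ is even; as $G$ is non-abelian we have $n\ge 1$ and $\dim V=2n+1\ge 3$. The preimage $E$ of $W$ in $G$ is then extraspecial of order $2^{1+2n}$ with $Z(E)=\Phi(G)$, and $G=Z(G)E$ with $Z(G)\cap E=\Phi(G)$ and $[Z(G),E]=1$; hence $G=Z(G)\circ E=C_4\circ E$, and using the standard isomorphism $C_4\circ 2^{1+2n}_{+}\cong C_4\circ 2^{1+2n}_{-}$ we conclude $G\cong 2^{1+2n}_{+}\circ C_4$.

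For the converse I would identify the proper characteristic subgroups in each case. If $G=2^{1+2n}_{\pm}$, a proper characteristic subgroup $N$ contains $Z(G)\cong C_2$ (a non-trivial normal subgroup of a $p$-group meets the centre), and $N/Z(G)$ is an $\mathrm{Aut}(G)$-invariant subspace of $V=G/Z(G)\cong\mathbb{F}_2^{2n}$; since $\mathrm{Out}(G)$ induces the full orthogonal group, which is irreducible on $V$ except that for $G\cong D_8$ there is a single invariant (anisotropic) line, the only possibilities for $N$ are $Z(G)$ and, when $G\cong D_8$, the cyclic subgroup $C_4$ — all cyclic. If $G=2^{1+2n}_{+}\circ C_4$ with the notation above, then every element of $G$ has order at most $4$, so the preimage of a proper $\mathrm{Aut}(G)$-invariant subspace $U\subsetneq V$ is cyclic precisely when $\dim U\le 1$ and $U$ is anisotropic; hence it suffices to show that $0$ and $V_0$ are the only proper $\mathrm{Aut}(G)$-invariant subspaces of $V$. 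Now $\mathrm{Aut}(G)$ preserves $q$, hence fixes $V_0=\operatorname{rad}(b)$ and induces a subgroup of $\operatorname{Sp}(V/V_0)\cong\operatorname{Sp}_{2n}(2)$; one checks (using $2^{1+2n}_{+}\circ C_4\cong 2^{1+2n}_{-}\circ C_4$) that this induced action is all of $\operatorname{Sp}_{2n}(2)$, which acts irreducibly on $V/V_0$, so every invariant subspace lies in $V_0$ or contains $V_0$. An invariant complement to $V_0$ would correspond to an $\operatorname{Sp}_{2n}(2)$-fixed quadratic form on $V/V_0$ refining $b$; for $n\ge 2$ there is none (the two orbits of such forms, distinguished by the Arf invariant, both have size greater than $1$), so the only proper characteristic subgroups of $G$ are $\Phi(G)\cong C_2$ and $Z(G)\cong C_4$.

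The hard part is exactly this last step: it requires controlling $\mathrm{Aut}(G)$ precisely — both that it induces the full symplectic group on $V/V_0$ and that the extension $1\to V_0\to V\to V/V_0\to 1$ of modules is non-split — and the dichotomy it hinges on is genuinely sharp, since for $n=1$ the group $2^{1+2}_{+}\circ C_4=D_8\circ C_4=Q_8\circ C_4$ does admit an invariant complement: it has a characteristic subgroup isomorphic to $Q_8$ (its unique quaternion subgroup), so it is not a $\mathrm{CCS}$ group and must be excluded from case (iii). The reduction to class $2$ with $|G'|=2$, and the treatment of the extraspecial case, are by comparison routine.
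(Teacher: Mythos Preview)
Your approach differs substantially from the paper's, and in one crucial respect you are right where the paper is wrong.

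For the forward direction, the paper argues via $\Omega(G)$: if $\Omega(G)<G$ then $\Omega(G)$ is cyclic, hence $G$ has a unique involution and is generalized quaternion, contradicting $\Phi(G)\le Z(G)$; it then invokes \cite[Proposition~3.3]{Bor} to conclude $G\cong 2^{1+2n}_+\circ C_4$. Your route via class~$2$, $|G'|=2$, $|Z(G)|\le 4$ and the quadratic form on $G/\Phi(G)$ is more self-contained and reaches the same conclusion without the external reference.

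For the converse in the central-product case, the paper writes $G=G_1G_2$ with $G_1\cong 2^{1+2n}_+$ and $G_2\cong C_4$, and for a characteristic subgroup $K$ asserts $K=(K\cap G_1)(K\cap G_2)$, then shows $K\cap G_1$ is characteristic in $G_1$ and deduces $K\le Z(G)$. The decomposition $K=(K\cap G_1)(K\cap G_2)$ is unjustified and in fact false, and your $n=1$ observation is exactly the counterexample: in $G=D_8\circ C_4=Q_8\circ C_4$ of order~$16$, the unique quaternion subgroup $K\cong Q_8$ is characteristic, yet $(K\cap G_1)(K\cap G_2)$ has order only~$4$. So the proposition as stated in the paper is incorrect; the central-product case needs the restriction $n\ge 2$.

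Your invariant-subspace analysis via the $\mathrm{Sp}_{2n}(2)$-action on the affine space of quadratic forms refining $\bar b$ is the right repair and pinpoints why $n=1$ is exceptional: the orbit of minus-type forms has size $2^{n-1}(2^n-1)$, equal to $1$ precisely when $n=1$. The step you flag as hard --- that $\mathrm{Aut}(G)$ surjects onto $O(V,q)\cong\mathrm{Sp}_{2n}(2)$ --- can be completed by observing that for each complement $W$ to $V_0$, every automorphism of the extraspecial preimage $E_W$ fixing $Z(E_W)$ extends to $G$ by acting trivially on $Z(G)$; the resulting copies of $O^{\pm}_{2n}(2)$, as $W$ varies, already generate $\mathrm{Sp}_{2n}(2)$.
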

	\begin{proof}
		Suppose that $G$ is an extraspecial $2$-group of order $2^{1+2n}$. If $n=1$, then $G$ is either the dihedral group or the quaternion group, and these are both $\mathrm{CCS}$ groups. If $n>1$, it is known that $G$ has a unique proper characteristic subgroup, which is the center, and this is cyclic by definition. 
		Assume then that $G$ is a $\mathrm{CCS}$ group, and that $G$ is not extraspecial. Aiming for a contradiction, suppose that $\Omega(G) < G$. Thus, $\Omega(G)$ is a cyclic subgroup formed by all the elements of order $2$. Since $\Omega(G)$ is cyclic, we have that $\Omega(G) = C_2$, and so $G$ contains a unique element of order $2$. It is known that such a group must be either cyclic or the generalized quaternion group. Then, $G$ is the generalized quaternion group. However, this is a contradiction, since the Frattini subgroup of this group is not central. Thus, $\Omega(G) = G$, and by \cite[Proposition~$3.3$]{Bor} we obtain that $G = 2^{1+2n}_+ \circ C_4$.
		It remains to show that $G = 2^{1+2n}_+ \circ C_4$ is a  $\mathrm{CCS}$ group. We have $$G=(2^{2n+1}_{+} \times C_{4}) / N,$$ where $C_2 \cong N  \leq Z(2^{2n+1}_+)\times Z(C_4) = C_2 \times C_4$. In particular, we have $Z(G) = Z(2^{2n+1}_+)\times Z(C_4) /N \cong C_4$. We claim that every characteristic subgroup of $G$ lies in its center. By definition, we have that $G = G_1 G_2$, with $G_1 \cong 2^{1+2n}_+$ and $G_2 \cong C_4$, and $[G_1,G_2]=1$. Let $K$ be a characteristic subgroup of $G$, and write $$K = (K\cap G_1)(K\cap G_2).$$ 
		Of course $K\cap G_2 \leq G_2 = Z(G_2)$. We now show that $K\cap G_1 \leq Z(G_1)$. Let $\varphi \in \mathrm{Aut}(G_1)$. Then, the map $\psi : G \to G$ defined by 
		$$\psi(g_1g_2) = \varphi(g_1) g_2,$$
		for $g_1 \in G_1$ and $g_2 \in G_2$ is an automorphism of $G$ (this follows from the fact that $G_1$ and $G_2$ commute). In particular, $\psi(K) = K$. Take now $k_1 \in K \cap G_1$. Then, $\psi(k_1) \in K$. But $\psi(k_1)=\varphi(k_1) \in G_1$. Thus, $\varphi(k_1) \in K \cap G_1$, implying that $K \cap G_1$ is a characteristic subgroup of $G_1$. But $G_1 \cong 2^{1+2n}_+$, and its unique characteristic subgroup is its center. Thus, $K \cap G_1 \leq Z(G_1)$. This shows that $K \leq Z(G)$, and thus $K$ is cyclic.
	\end{proof}
	In the following proposition, we take into account the case of $2$-groups with Frattini subgroup not central, completing the classification of $\mathrm{CCS}$ $2$-groups.
	\begin{proposition}
		Let $G$ be a non-abelian $\mathrm{CCS}$ $2$-group of order $|G|=2^n$, with $\Phi(G) \not \leq Z(G)$. Then, either $G$ is a dihedral group, or $G$ is a generalized quaternion group.
	\end{proposition}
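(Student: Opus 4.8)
The plan is to extract the needed structure from the two characteristic subgroups $\Phi(G)$ and $C_G(\Phi(G))$. Since $G$ is a $2$-group, $G/\mho_1(G)$ is elementary abelian, so $G'\le\mho_1(G)$ and hence $\Phi(G)=\mho_1(G)=G^2$. Being a proper characteristic subgroup of the $\mathrm{CCS}$ group $G$, it is cyclic; write $\Phi(G)=\langle a\rangle\cong C_{2^m}$. If $m\le 1$, then either $\Phi(G)=1$, forcing $G$ to be elementary abelian against non-abelianness, or $|\Phi(G)|=2$, forcing $\Phi(G)$ (normal of order $2$ in a $2$-group) to be central, against the standing hypothesis $\Phi(G)\not\le Z(G)$; so $m\ge 2$. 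Now put $C=C_G(\Phi(G))$. As the centralizer of a characteristic subgroup, $C$ is characteristic, and $C\ne G$ exactly because $\Phi(G)\not\le Z(G)$; hence $C$ is cyclic. Since $\Phi(G)$ is abelian we have $\Phi(G)\le C$, so $G/C$ is a quotient of the elementary abelian group $G/\Phi(G)$; on the other hand conjugation embeds $G/C$ into $\mathrm{Aut}(\Phi(G))=\mathrm{Aut}(C_{2^m})$, whose largest elementary abelian subgroup has order at most $4$. Therefore $|G:C|\in\{2,4\}$.

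If $|G:C|=2$, then $C$ is a cyclic maximal subgroup of $G$. By the classical classification of $2$-groups possessing a cyclic subgroup of index $2$, and since $G$ is non-abelian, $G$ is a dihedral group $D_{2^n}$, a generalized quaternion group $Q_{2^n}$, a semidihedral group $SD_{2^n}$, or a modular maximal-cyclic group $M_{2^n}=\langle a,b\mid a^{2^{n-1}}=b^2=1,\ bab^{-1}=a^{1+2^{n-2}}\rangle$. The semidihedral case is excluded by Lemma~\ref{lemma:semidihedral}; and in $M_{2^n}$ one has $\Phi(M_{2^n})=\langle a^2\rangle$ with $ba^2b^{-1}=a^{2+2^{n-1}}=a^2$, so $\Phi(M_{2^n})\le Z(M_{2^n})$, contradicting the hypothesis. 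This leaves precisely the two asserted possibilities.

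The remaining---and genuinely delicate---point is to rule out $|G:C|=4$, and this is where I expect the real work to lie. Here I would first observe that any element centralizing $C$ also centralizes $\Phi(G)\le C$, so $C_G(C)\le C_G(\Phi(G))=C$ and thus $C=C_G(C)$; consequently $G/C\cong C_2\times C_2$ acts faithfully on $C\cong C_{2^k}$. Faithfulness forces $k\ge 3$ and identifies the image of $G/C$ with the full $2$-torsion subgroup of $\mathrm{Aut}(C)$, acting on a generator $c$ of $C$ through the exponents $-1$, $1+2^{k-1}$ and $-1+2^{k-1}$. A one-line commutator computation over these three non-trivial cosets then gives $[G,C]=\mho_1(C)$. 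Finally, determining which cosets of $C$ centralize $\mho_1(C)$ shows that exactly one non-trivial coset does (the one acting by $1+2^{k-1}$), so $D:=C_G(\mho_1(C))$ is a subgroup of index $2$ in $G$ containing $C$; since $C_D(C)\le C_G(C)=C\subsetneq D$, the subgroup $D$ is non-abelian, hence non-cyclic. But $\mho_1(C)$ is characteristic in $C$ and therefore in $G$, so $D$ is a proper non-cyclic characteristic subgroup of $G$---contradicting the $\mathrm{CCS}$ property. Hence $|G:C|=4$ cannot occur, completing the proof; every step but this last exclusion is routine bookkeeping with standard facts about $2$-groups.
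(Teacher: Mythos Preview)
Your argument is correct, and it follows a genuinely different route from the paper's. The paper splits on whether $\Omega(G)=G$: when $\Omega(G)<G$ it quotes \cite{Bor} to force $G\cong Q_{2^n}$ or $SD_{2^n}$ and then invokes Lemma~\ref{lemma:semidihedral}; when $\Omega(G)=G$ it quotes \cite{BERGER} to obtain $[G:C]=2$ and then argues that a $2$-generated group generated by involutions is dihedral. You instead bypass $\Omega(G)$ entirely, bounding $[G:C]\le 4$ directly from $G/C\hookrightarrow\mathrm{Aut}(\Phi(G))$ being elementary abelian, handling $[G:C]=2$ via the standard classification of $2$-groups with a cyclic maximal subgroup (ruling out $SD_{2^n}$ by Lemma~\ref{lemma:semidihedral} and $M_{2^n}$ by the centrality of its Frattini subgroup), and eliminating $[G:C]=4$ by exhibiting the proper non-cyclic characteristic subgroup $C_G(\mho_1(C))$. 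Your approach is more self-contained---it avoids the external results of \cite{Bor} and \cite{BERGER} at the cost of the explicit index-$4$ analysis---while the paper's is shorter by outsourcing the structural work. One small remark: the computation $[G,C]=\mho_1(C)$ that you mention is not actually needed for your contradiction; the identification of which coset centralizes $\mho_1(C)$ suffices on its own.
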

	\begin{proof}
		Suppose that $\Omega(G) < G$. By~\cite[Proposition~$3.3$]{Bor}, either $G$ is isomorphic to $Q_{2^n}$ or $G$ is isomorphic to $SD_{2^n}$. However, by Lemma~\ref{lemma:semidihedral}, $SD_{2^n}$ is not a $\mathrm{CCS}$ group, and so $G = Q_{2^n}$ in this case.\\
		Suppose then that $\Omega(G) = G$, so that $G$ is generated by elements of order $2$. Take $C = \mathbf{C}_G(\Phi(G))$. Since $\Phi(G)$ is not central, we have that $C < G$. Since $\Phi(G)$ characteristic, so is its centralizer. In particular, $C$ is a cyclic group. The proof of $(5)$ of~\cite{BERGER} shows that $\Phi(G) = G^2$, $[G:C]=2$, and that $G=\langle h,C\rangle$, where $h$ is any element outside $C$. Thus, since $C$ is cyclic, $G$ has a minimal generating set of cardinality $2$. But every minimal generating set of a $p$-group has the same cardinality. Therefore, we can extract a generating set of two elements from the set of elements of order $2$. In particular, $G$ is a dihedral group.
	\end{proof}
	Summing up, we have the following classification of $\mathrm{CCS}$ $p$-groups.
	\begin{proposition}
		Let $G$ be a $p$-group, for some prime $p$. Then, $G$ is a $\mathrm{CCS}$ group if, and only if, one of the following holds.
		\begin{itemize}
			\item $p=2$, and $G$ is either a  dihedral group, or a generalized quaternion group, or $G \cong 2^{1+2n}_\pm$ or $G = 2^{1+2n}_+ \circ C_4$, for some $n>0$.
			\item $p$ is odd, and $G \cong p^{1+2n}_+$, for some $n>0$.
		\end{itemize}
	\end{proposition}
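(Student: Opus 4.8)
The plan is to read the statement off by combining Lemma~\ref{lemma:pgrp} with the three propositions proved above in this section, which between them treat every possible configuration. First I would use Lemma~\ref{lemma:pgrp} to reduce to the non-abelian case: an abelian non-cyclic $p$-group is either elementary abelian, hence characteristically simple and not a $\mathrm{CCS}$ group, or it has the non-cyclic proper characteristic subgroup $\Omega(G)$ and again fails to be $\mathrm{CCS}$; conversely every $\mathrm{CCS}$ group is non-cyclic and, being nilpotent here, is covered by Lemma~\ref{lemma:pgrp}. So it is enough to decide, for a non-abelian $p$-group $G$, when $G$ is a $\mathrm{CCS}$ group.

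If $p$ is odd, the first proposition of this section states precisely that this happens if and only if $G\cong p^{1+2n}_+$, giving the second bullet. If $p=2$, I would split on whether $\Phi(G)\leq Z(G)$. When $\Phi(G)\leq Z(G)$, the second proposition yields $G\cong 2^{1+2n}_\pm$ or $G\cong 2^{1+2n}_+\circ C_4$; when $\Phi(G)\not\leq Z(G)$, the third proposition yields that $G$ is a dihedral group or a generalized quaternion group. These two cases obviously exhaust the non-abelian $2$-groups, so the forward implication is complete, and for the converse each of the listed groups was verified to be a $\mathrm{CCS}$ group inside the corresponding proposition (with the usual overlaps: $D_8\cong 2^{1+2}_+$ and $Q_8\cong 2^{1+2}_-$, and in general the dihedral $2$-groups meant here have order at least $8$, since $D_4\cong C_2\times C_2$ is abelian and thus excluded).

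There is essentially no obstacle: the whole content has already been established in the three earlier propositions, and this statement is just their amalgamation together with the elementary reduction to the non-abelian case provided by Lemma~\ref{lemma:pgrp}. The only point requiring a line of justification is that the dichotomy $\Phi(G)\leq Z(G)$ versus $\Phi(G)\not\leq Z(G)$ is exhaustive, which is immediate.
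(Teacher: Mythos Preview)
Your proposal is correct and matches the paper's approach: the paper simply prefaces this proposition with ``Summing up, we have the following classification of $\mathrm{CCS}$ $p$-groups'' and gives no further argument, so the content is exactly the amalgamation of Lemma~\ref{lemma:pgrp} with the three preceding propositions that you describe. One tiny inaccuracy: the third proposition (the case $\Phi(G)\not\leq Z(G)$) is stated only as a forward implication, so the fact that dihedral and generalized quaternion $2$-groups of order $>8$ are $\mathrm{CCS}$ is not verified \emph{inside} that proposition; the paper instead relies on the remark in the Introduction (and later in Section~\ref{sec:2}) that every dihedral and every dicyclic group has all characteristic subgroups contained in the cyclic rotation subgroup.
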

	\section{Non-nilpotent solvable $\mathrm{CCS}$ groups}\label{sec:2}
	Suppose now that $G$ is a non-nilpotent $\mathrm{CCS}$ group. In particular, $G' \leq G$ is a non trivial subgroup. For the remaining of this section, we suppose that $G' < G$, so that $G'$ is cyclic. Hence, by Lemma~\ref{lemmasupersolv1}, $G$ is supersolvable. The first result of this section gives a characterization of these groups.
	\begin{theorem}\label{sec2:thm:1}
		Let $G$ be a non-nilpotent $\mathrm{CCS}$ group with $G' < G$, and let $p$ be the smallest prime dividing the order of $G$. Then, 
		\[
		[G:\mathrm{F}(G)] = p,
		\]
		where $\mathrm{F}(G)$ is the Fitting subgroup of $G$. In particular, $\mathrm{F}(G)$ is a maximal subgroup of $G$.
	\end{theorem}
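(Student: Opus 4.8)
The plan is to realise $G$ as a split extension $H \rtimes P$, to identify $\mathrm{F}(G)$ with the centraliser $C_G(H)$, and then to deduce $[G:\mathrm{F}(G)] = p$ from the cyclicity of a single well-chosen characteristic centraliser. Since $G$ is supersolvable we already have $G' \le \mathrm{F}(G)$, and by Lemma~\ref{lemmasupersolv2} the set $H$ of elements of order prime to $p$ forms a normal --- hence characteristic --- Hall $\pi$-subgroup, where $\pi$ is the set of primes $> p$ dividing $|G|$. As $p \mid |G|$, $H$ is a proper characteristic subgroup, hence cyclic; and $H \ne 1$, for otherwise $G$ would be a nilpotent $p$-group. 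By Schur--Zassenhaus we may write $G = H \rtimes P$ with $P$ a Sylow $p$-subgroup. First I would record the identifications $\mathrm{F}(G) = C_G(H) = H \times C_P(H)$: the subgroup $C_G(H) = H \times C_P(H)$ is normal and nilpotent, hence contained in $\mathrm{F}(G)$, while conversely $\mathrm{F}(G) = O_{p'}(G) \times O_p(G)$ centralises $H$ since $O_{p'}(G) = H$ is abelian and $[O_p(G),H] \le O_p(G) \cap H = 1$. Writing $Q := C_P(H) = O_p(G)$, this gives $G/\mathrm{F}(G) \cong P/Q$, and $P/Q$ acts faithfully on $H$.

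Next I would show that $G/\mathrm{F}(G)$ is a nontrivial elementary abelian $p$-group. It is nontrivial precisely because $G$ is not nilpotent. Since $G'$ is characteristic, Lemma~\ref{lemma:quot} tells us that $G/G'$ is either characteristically simple or a CCS group; but an abelian group can never be a CCS group --- if it is non-cyclic but not elementary abelian, then $\Omega_1$ is a proper non-cyclic characteristic subgroup, and if it is elementary abelian, it is characteristically simple --- so $G/G'$ is characteristically simple abelian, i.e.\ elementary abelian. As $G' \le \mathrm{F}(G)$, the quotient $G/\mathrm{F}(G) \cong P/Q$ is a quotient of $G/G'$, hence elementary abelian; being a $p$-group, it is isomorphic to $C_p^{\,j}$ for some $j \ge 1$.

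The crux is to show $j = 1$. Decompose $H = \prod_i H_i$ into its Sylow subgroups; each $H_i$ is cyclic of order $q_i^{e_i}$, where $q_i$ is an \emph{odd} prime (odd since $q_i \mid |G|$ and $q_i > p \ge 2$), so each $\mathrm{Aut}(H_i) \cong (\mathbb{Z}/q_i^{e_i}\mathbb{Z})^\times$ is cyclic. Since $P/Q \ne 1$ acts faithfully on $\prod_i H_i$, it acts nontrivially on some factor $H_i$. For that $i$, the subgroup $C_G(H_i)$ is characteristic in $G$, equals $H \rtimes C_P(H_i)$ by Dedekind's modular law, and is a \emph{proper} subgroup because $P$ does not centralise $H_i$; hence $C_G(H_i)$ is cyclic, in particular abelian, which forces $C_P(H_i)$ to centralise $H$, i.e.\ $C_P(H_i) \le C_P(H) = Q$, so that $C_P(H_i) = Q$. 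Consequently $P/Q = P/C_P(H_i)$ embeds into the cyclic group $\mathrm{Aut}(H_i)$; a nontrivial elementary abelian $p$-group that embeds into a cyclic group must be $\cong C_p$. Therefore $[G:\mathrm{F}(G)] = |P/Q| = p$, and $\mathrm{F}(G)$, having prime index, is maximal.

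The point where care is needed is exactly this last step: $\mathrm{Aut}(H)$ itself need not be cyclic once $H$ has more than one prime divisor, so the mere embedding $G/\mathrm{F}(G) \hookrightarrow \mathrm{Aut}(H)$ yields no bound on the index; one must descend to a single Sylow factor $H_i$ on which $G/\mathrm{F}(G)$ \emph{still acts faithfully}, and it is precisely the cyclicity of the characteristic subgroup $C_G(H_i)$ --- that is, the CCS hypothesis --- that supplies this faithfulness. The remaining ingredients ($\mathrm{F}(G) = C_G(H) = H \times Q$ and $Q = O_p(G)$, using $O_{p'}(G) = H$) are routine.
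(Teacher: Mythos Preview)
Your overall strategy is sound and, in its central idea, more direct than the paper's. There is, however, one genuine gap.

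You argue via Lemma~\ref{lemma:quot} that $G/G'$ must be characteristically simple (hence elementary abelian), since an abelian group cannot be CCS. But Lemma~\ref{lemma:quot} as stated omits a case: the quotient $G/N$ may be \emph{cyclic}, which is neither characteristically simple (unless of prime order) nor CCS (CCS groups are non-cyclic by definition). And this case actually occurs for the groups you are studying: in item~(vii) of Theorem~\ref{thm:main}, namely $G=\langle x,y\mid x^m=y^{p^\alpha}=1,\ yxy^{-1}=x^k\rangle$ with $(k-1,m)=1$, one has $G'=\langle x\rangle$ and hence $G/G'\cong C_{p^\alpha}$, which is not elementary abelian once $\alpha\ge 2$. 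So your assertion that $G/G'$ is elementary abelian is false in general, and with it the deduction that $P/Q\cong C_p^{\,j}$.

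The repair is painless: work with $G/\mathrm F(G)$ instead of $G/G'$. Any proper characteristic subgroup of $G$ strictly containing $\mathrm F(G)$ would be cyclic (CCS), hence nilpotent, hence contained in $\mathrm F(G)$, a contradiction; so by Lemma~\ref{lemma:charQuot} the quotient $G/\mathrm F(G)$ is characteristically simple, and being an abelian $p$-group it is $C_p^{\,j}$. Alternatively, your $C_G(H_i)$ argument already shows $P/Q$ embeds in a cyclic group, hence is cyclic; then every subgroup between $\mathrm F(G)$ and $G$ is characteristic, and the same observation forces $\mathrm F(G)$ to be maximal---so you never needed elementary-abelianness at all.

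With this fix your proof is correct and follows a genuinely different route from the paper's. The paper argues by minimal counterexample, reducing to $\Phi(G)=Z(G)=1$, and then recognises $G$ as a Frobenius group with kernel $\mathrm F(G)$ and complement $C_p^{\,k}$; Proposition~\ref{prop:frob} (Sylow subgroups of Frobenius complements are cyclic or generalized quaternion) then forces $k=1$. Your approach avoids Frobenius theory and the minimal-counterexample machinery entirely: the observation that the characteristic centraliser $C_G(H_i)$ is cyclic forces $P/Q$ to act faithfully on a \emph{single} Sylow factor $H_i$ of $H$, whence $P/Q\hookrightarrow\mathrm{Aut}(H_i)$ with the latter cyclic. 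This is shorter and more self-contained; the paper's argument, by contrast, is more structural, identifying the obstruction to $k>1$ as precisely the Frobenius-complement constraint.
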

	\begin{proof}
		We have that $\mathbf{C}_G(\mathrm{F}(G)) = Z(\mathrm{F}(G)) = \mathrm{F}(G)$. In particular, $G/\mathrm{F}(G) \leq \mathrm{Aut}(\mathrm{F}(G))$. But the Fitting subgroup is a cyclic subgroup, and hence its automorphism group is abelian, and therefore $G/\mathrm{F}(G)$ is abelian. Let now $q$ be a prime number dividing $|G/\mathrm{F}(G)|$, and consider the subgroup $\Omega_q(G/\mathrm{F}(G))$ of the elements of order $q$. This is a characteristic subgroup of $G/\mathrm{F}(G)$. Suppose that $ \Omega_q(G/\mathrm{F}(G)) = H/\mathrm{F}(G)$, for some $H \leq G$. Suppose that $H \neq G$. In particular, by Lemma~\ref{lemma:charQuot}, $H$ is a characteristic subgroup of $G$, and so it is cyclic. This implies that $H \leq \mathrm{F}(G)$, and so $H = \mathrm{F}(G)$ and $\Omega_q(G/\mathrm{F}(G)) = 1$, a contradiction. In particular, $\Omega_q(G/\mathrm{F}(G)) = G/\mathrm{F}(G)$, implying that $G/\mathrm{F}(G)$ is an elementary abelian group of order $q^s$, for some $s$. Write now $|G| = p_1^{\alpha_1} p_2^{\alpha_2} \cdots p_t^{\alpha_t}$, where $p_1 < p_2 < \dots < p_t$. Since $G$ is supersolvable, by Lemma~\ref{lemmasupersolv2} the subset formed by all the elements of order coprime to $p_1$ is a characteristic subgroup of $G$, and so it is contained in $\mathrm{F}(G)$. In particular, $q=p_1$. Moreover, if $P_i$ is the Sylow $p_i$-subgroup of $G$, and $i\geq 2$, then $P_i \leq \mathrm{F}(G)$. In particular, $|\mathrm{F}(G)| = n = p_1^b p_2^{\alpha_2}\cdots p_t^{\alpha_t}$, for some $b < \alpha_1$, and $G/\mathrm{F}(G) = C_{p_1}^k$, where $k = a-b$. We claim that $k=1$. Aiming for a contradiction, suppose that there exists a non-nilpotent $\mathrm{CCS}$ group $G$ with $G'<G$ such that $G/\mathrm{F(G)} = C_{p_1}^k$, with $k>1$, where the orders of $G$ and $\mathrm{F}(G)$ are as before. Suppose moreover that $G$ is of minimum order with these properties. Observe that $\Phi(G) < \mathrm{F}(G)$. Indeed, if $\Phi(G) =\mathrm{F}(G)$, then $G' \leq \mathrm{F}(G) = \Phi(G)$, and thus $G$ is nilpotent, a contradiction. 
		We claim that $Z(G) = 1$. Firstly, take $A = G/\Phi(G)$. This is a solvable group, and thus $A' < A$. Moreover, $A$ is not abelian, because otherwise $G' \leq \Phi(G)$, which is impossible. Since $\Phi(G) < \mathrm{F}(G)$, we have
		\[
		\mathrm{F}(A) = \frac{\mathrm{F}(G)}{\Phi(G)}\neq 1.
		\]
		Moreover, $\mathrm{F}(A) < A$, because otherwise $\mathrm{F}(G) = G$. In conclusion, $A$ is not-nilpotent and $A$ is not characteristically simple. By Lemma~\ref{lemma:quot}, $A$ is a $\mathrm{CCS}$ group. Moreover,
		\[
		\frac{A}{\mathrm{F}(A)} \cong \frac{G}{\mathrm{F}(G)} = C_{p_1}^k.
		\] 
		By minimality of $G$, we have that $A=G$, so that $\Phi(G) =1$.\\
		Take now $B = G/Z(G)$. Since $G' \cap Z(G) \leq \Phi(G) = 1$, we have
		\[
		B' = \frac{G'Z(G)}{Z(G)} \cong \frac{G'}{G' \cap Z(G)} = G'.
		\]
		In particular, $B' \neq 1$ and $B' < B$, since $B$ is solvable. Moreover, $\mathrm{F}(B) = \mathrm{F}(G)/Z(G) \neq B$, because otherwise $\mathrm{F}(G) = G$. Moreover, if $\mathrm{F}(B) = 1$, then $\mathrm{F}(G) = Z(G)$ and thus $\mathrm{F}(G) \cap G' = 1$, implying that $G' = 1$, a contradiction. This shows that $B$ is not characteristically simple, and thus by Lemma~\ref{lemma:quot} $B$ is a $\mathrm{CCS}$ group. Moreover,
		\[
		\frac{B}{\mathrm{F}(B)} \cong \frac{G}{\mathrm{F}(G)} = C_{p_1}^k.
		\]
		By minimality of $G$, we have that $B=G$ and thus $Z(G) = 1$.\\ 
		Let $x$ be a non trivial element of $\mathrm{F}(G)$, and consider $\langle x \rangle \leq \mathrm{F}(G)$. Since $\mathrm{F}(G)$ is a characteristic subgroup of $G$, $\langle x \rangle$ is also a characteristic subgroup of $G$. Therefore, even its centralizer $\mathbf{C}_G(x)$ is characteristic in $G$. Since $G$ is a $\mathrm{CCS}$ group, either $\mathbf{C}_G(x) = G$ or $\mathbf{C}_G(x) \leq \mathrm{F}(G)$. However, the former case implies that $\langle x \rangle \leq Z(G)=1$, which is impossible. Thus, $\mathbf{C}_G(x)\leq \mathrm{F}(G)$. This shows that $G$ is a Frobenius group, with Frobenius kernel $\mathrm{F}(G)$ and complement 
		\[
		\frac{G}{\mathrm{F}(G)} = C_{p_1}\times \cdots \times C_{p_1}.
		\]
		By Proposition~\ref{prop:frob}, the Sylow $p_1$-subgroups of $C_{p_1}\times \cdots \times C_{p_1}$ are either cyclic or generalized quaternion, and this is a contradiction if $k>1$. Thus, $k=1$, and the proof is completed.

	\end{proof}
	Let now $G$ be a non-nilpotent $\mathrm{CCS}$ group with $G'\neq G$, of order $|G| = p_1^{\alpha_1}\cdots p_t^{\alpha_t}$, with $p_1 < p_2 < \dots p_t$ primes and $\alpha_i$ non-negative integers. Since, as we already said, $G$ is supersolvable, it admits a normal subgroup $N$ of order $m = p_2^{\alpha_1}\cdots p_t^{\alpha_t}$. This subgroup $N$ is also characteristic, and thus $N = C_m$. If $P$ is a Sylow $p_1$-subgroup of $G$, we have that $G = C_m \rtimes P$. Observe also that since $G/C_m = P$, $P$ is either an elementary abelian $p_1$-group, or a $\mathrm{CCS}$ $p_1$-group, by Lemma~\ref{lemma:quot}. Moreover, since $G/\mathrm{F}(G) = C_{p_1}$, we have
	\[
	C_{p_1} \cong \frac{G}{\mathrm{F}(G)} \cong \frac{G/C_m}{\mathrm{F}(G)/C_m} \cong \frac{P}{C_{p_1^{\alpha_1-1}}}.
	\]
	This shows that $P$ has a cyclic subgroup of order $p_1^{\alpha_1-1}$. Now, \cite[Chapter~$5$, Theorem~$4.4$]{Go} gives a classification of such groups. In particular, using the fact that $P$ is itself a $\mathrm{CCS}$ group or a characteristically simple group, we obtain that $P \in \{C_{p_{1}} \times C_{p_{1}}, C_{p_{1}^{\alpha_{1}}},D_{2^{\alpha_{1}}},Q_{2^{\alpha_{1}}} \}$. Let $p = p_1$ and $\alpha=\alpha_1$. In the following, for each of the possibility of $P$, we give necessary and sufficient conditions to the group $G = C_m \rtimes P$ in order to be a $\mathrm{CCS}$ group.
	\subsection{The case $P = C_p \times C_p$}
	\mbox{}\\
	Let $G = C_m \rtimes (C_p \times C_p)$. From Theorem~\ref{sec2:thm:1}, $\mathrm{F}(G)$ is a maximal subgroup of $G$ of order $mp$. Take $H \leq \mathrm{F}(G)$ of order $p$. Then, $H \leq Q$, for some Sylow $p$-subgroup $Q$ of $G$, with $Q = \langle a,b \, | \, a^p=b^p = 1, ab=ba \rangle$ and $H = \langle a \rangle$. Thus, we may write $G = C_m \rtimes_{\psi} Q$, where $C_m = \langle c \rangle$, for some homomorphism $\psi : Q \to \mathrm{Aut}(C_m)$, where $\psi(a)(c) = c^t$ and $\psi(b)(c) = c^r$, with $(t,m)=(r,m)=1$, $t^p \equiv 1 \mod m$ and $r^p \equiv 1 \mod m$. Observe now that $\mathrm{F}(G) = C_{mp} = \langle c,a \rangle$, and therefore we may suppose $t=1$. In conclusion, we have that 
	\[
	G = \langle c,a,b \ | \ c^{m}=a^{p}=b^{p}=1,ab=ba,ac=ca,bcb^{-1}=c^{r} \rangle.
	\]
	Observe finally that putting $x=ac=ca$ and $y=b$, we have 
	\[
	G = \langle x,y \, | \, x^{mp} = y^p = 1, yxy^{-1} = x^k \rangle,
	\]
	where $k\equiv r \mod m$, $k^p \equiv 1 \mod mp$, $k \not \equiv 1 \mod mp$.
	Before characterizing $\mathrm{CCS}$ group of the form above, we need a number theoretic lemma.
	\begin{lemma}\label{lemma:numeric}
		Let $m$ be an integer number, $p$ a prime number with $p<q$ for every prime $q$ dividing $m$. Let $k$ be an integer with $(m,k)=1$, $k \not \equiv 1 \mod m$, $k^p \equiv 1 \mod m$. Then, 
		\[
		(k-1,m)=1
		\]
		if, and only if,
		\[
		(k^u-1,m)=1
		\]
		for every $u \in \{1,\dots,p-1\}$.
	\end{lemma}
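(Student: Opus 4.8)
The plan is to prove the two implications separately; one is completely trivial and the other reduces to a one-line computation with the multiplicative order of $k$ modulo the prime divisors of $m$.

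First, the implication ``$(k^u-1,m)=1$ for every $u\in\{1,\dots,p-1\}$ $\Longrightarrow$ $(k-1,m)=1$'' requires nothing: it is the special case $u=1$.

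For the converse I would argue by contradiction. Assume $(k-1,m)=1$ but $(k^u-1,m)\neq 1$ for some $u\in\{1,\dots,p-1\}$, and fix a prime $\ell$ dividing $\gcd(k^u-1,m)$. Since $(k,m)=1$ and $\ell\mid m$, the integer $k$ is a unit modulo $\ell$; let $d$ be its multiplicative order in $(\mathbb{Z}/\ell\mathbb{Z})^{\times}$. From $\ell\mid k^u-1$ we get $d\mid u$, and reducing $k^p\equiv 1\pmod m$ modulo $\ell$ gives $d\mid p$. Hence $d$ divides $\gcd(u,p)$, which equals $1$ because $p$ is prime and $1\le u\le p-1$. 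Therefore $d=1$, i.e. $k\equiv 1\pmod\ell$, so $\ell$ divides both $k-1$ and $m$, contradicting $(k-1,m)=1$. This completes the argument.

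I do not expect any genuine obstacle here: once one passes to the order of $k$ modulo a prime divisor $\ell$ of $m$, the conclusion is forced by coprimality of $u$ and $p$. I would also remark that this particular lemma does not use the hypotheses $k\not\equiv 1\pmod m$ nor $p<q$ for every prime $q\mid m$; these belong to the ambient setup of the section and will presumably be invoked only when the lemma is applied.
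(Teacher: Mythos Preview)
Your argument is correct. The trivial direction is handled as in the paper, and for the nontrivial direction your use of the multiplicative order of $k$ modulo a prime divisor $\ell$ of $m$ is a clean and entirely valid route: $d\mid u$ and $d\mid p$ force $d=1$, hence $\ell\mid k-1$, contradicting $(k-1,m)=1$.

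The paper proceeds differently. Instead of invoking the order of $k$, it takes the minimal $u\in\{2,\dots,p-1\}$ with $(k^u-1,m)>1$, picks a prime $q$ dividing this gcd, and works with the geometric sums $1+k+\cdots+k^{u-1}$ and $1+k+\cdots+k^{p-1}$. Repeated subtraction shows that $q\mid k^{p-u}-1$, then $q\mid k^{p-2u}-1$, and so on, forcing $u\le p/t$ for all $t$, a contradiction. In effect the paper is running the Euclidean algorithm on the exponents by hand, while you short-circuit this by naming the order $d$ and using $\gcd(u,p)=1$ directly. Your approach is shorter and makes the underlying mechanism transparent; the paper's has the minor advantage of being fully self-contained without the language of orders, but both arrive at the same point. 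Your closing observation is also accurate: neither the hypothesis $k\not\equiv 1\pmod m$ nor $p<q$ for all $q\mid m$ is actually used in the proof (yours or the paper's); they are part of the ambient setup and enter only in the subsequent application.
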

	\begin{proof}
		If $(k^u-1,m)=1$ for every $u \in \{1,\dots,p-1\}$ then $(k-1,m)=1$ trivially holds.\\
		Suppose that $(k-1,m)=1$. Aiming for a contradiction, suppose that there exists $u \in \{2,\dots,p-1\}$ for which $(k^u-1,m)>1$, and let $u$ be minimal with this property. Take $q$ to be a prime diving $(k^u-1,m)$. We have that
		\[
		q | k^u-1 = (k-1)(1+k+\cdots+k^{u-1}).
		\]
		Since $q$ divides $m$, and $(k-1,m)=1$, we have that $q$ divides $1+k+\cdots+k^{u-1}$. But $m$ divides $k^p-1$, and therefore $q$ also divides $1+k+\cdots+k^{p-1}$. Thus,
		\[
		q|(1+k+\cdots+k^{p-1}) - (1+k+\cdots+k^{u-1}) = k^u(1+k+\cdots+k^{p-1-u}).
		\]
		Since $(m,k)=1$, we have that $q|(1+k+\cdots+k^{p-1-u})$. Thus, $q| k^{p-u}-1$. By minimality of $u$, we obtain that $u \leq p-u$, and so $u \leq p/2$.\\
		Now $q$ divides $1+k+\cdots+k^{u-1}$ and also $1+k+\cdots+k^{p-u-1}$. Thus,
		\[
		q|(1+k+\cdots+k^{u-1})-(1+k+\cdots+k^{p-u-1}) = k^u(1+k+\cdots+k^{p-2u-1}).
		\]
		So that $q$ divides $k^{p-2u}-1$, and by minimality of $u$ we obtain $u \leq p/3$. Continuing in this way, we obtain $u \leq p/t$ for every $t$, a contradiction.
	\end{proof}
	\begin{proposition}\label{prop:CpxCp}
		Let 
		\[
		G = \langle x,y \, | \, x^{mp} = y^p = 1, yxy^{-1} = x^k \rangle,
		\]
		where $m$ is an integer, $p$ the smallest prime dividing the order of $G$, $(m,k)=1$, $k^p \equiv 1 \mod mp$, $k \not \equiv 1 \mod mp$. Then, $G$ is a $\mathrm{CCS}$ group if, and only if,
		\[
		(k-1,m)=1.
		\]
	\end{proposition}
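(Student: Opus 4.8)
The plan is to prove the two implications separately; throughout we use that $(m,p)=1$ (as in the construction preceding the statement) and that $p\mid k-1$, since $k^p\equiv 1\pmod p$ together with Fermat's little theorem forces $k\equiv 1\pmod p$. A few preliminary remarks first. The subgroup $\langle x\rangle$ is normal of index $p$ in $G$, it is cyclic, and $G$ is non-nilpotent (otherwise $y$ would centralise $\langle x\rangle$, i.e. $k\equiv 1\bmod mp$); since $\langle x\rangle$ is then a maximal subgroup contained in $\mathrm{F}(G)\neq G$, we get $\langle x\rangle=\mathrm{F}(G)$, which is therefore characteristic of index $p$. In particular $G$ is non-abelian (hence non-cyclic) and not characteristically simple, and $G'=\langle x^{k-1}\rangle\le\langle x\rangle$ is cyclic.

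For the implication ``$G$ is $\mathrm{CCS}\Rightarrow(k-1,m)=1$'' I would argue the contrapositive. Suppose $d:=(k-1,m)>1$. Since $(m,p)=1$ and $p\mid k-1$ we get $\gcd(k-1,mp)=dp$, so $G'=\langle x^{k-1}\rangle\cong C_{m/d}$ and $G/G'$ is abelian of order $p^2d$, generated by $\bar x$ of order $pd$ and $\bar y$ of order $p$, with $\bar y\notin\langle\bar x\rangle$ (as $y\notin\langle x\rangle\supseteq G'$); hence $G/G'\cong C_{pd}\times C_p\cong C_d\times C_p\times C_p$, using $(d,p)=1$. The Sylow $p$-subgroup $P_0/G'$ of $G/G'$ is then isomorphic to $C_p\times C_p$, is characteristic in $G/G'$, and is proper because $d>1$; by Lemma~\ref{lemma:charQuot} its preimage $P_0$ is characteristic in $G$, proper, and non-cyclic (it maps onto $C_p\times C_p$). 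This contradicts $G$ being a $\mathrm{CCS}$ group, so $d=1$.

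For the converse, assume $(k-1,m)=1$, so $\gcd(k-1,mp)=p$ and therefore $G'=\langle x^p\rangle=C_m$, $G/G'\cong C_p\times C_p$. The key tool is the family of maps $\varphi_s\colon x\mapsto x,\ y\mapsto yx^s$ ($s\in\mathbb{Z}$). Each $\varphi_s$ is an automorphism of $G$: the relation $yxy^{-1}=x^k$ is preserved since $(yx^s)x(yx^s)^{-1}=yxy^{-1}$, while $(yx^s)^p=x^{sk(1+k+\cdots+k^{p-1})}$, and the exponent sum $1+k+\cdots+k^{p-1}$ is divisible by $p$ (a sum of $p$ terms each $\equiv 1\bmod p$) and by $m$ (since $m\mid k^p-1$ and $(k-1,m)=1$), hence by $mp$; thus $(yx^s)^p=1$, and since $x$ and $yx^s$ generate $G$ the endomorphism $\varphi_s$ is surjective, so it is an automorphism. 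Now let $K$ be a proper characteristic subgroup. As $[G:\langle x\rangle]=p$, either $K\le\langle x\rangle$, in which case $K$ is cyclic and we are done, or $K\langle x\rangle=G$. In the latter case pick $z\in K\setminus\langle x\rangle$; replacing $z$ by a suitable power $z^c$ with $(c,p)=1$ and collecting all occurrences of $y$ to the right using $y^p=1$, we may assume $z=x^ay\in K$. Then for every $s$,
\[
\varphi_s(z)\,z^{-1}=x^a\bigl(yx^sy^{-1}\bigr)x^{-a}=x^{sk}\in K,
\]
and since $(k,mp)=1$ these elements run over all of $\langle x\rangle$; hence $\langle x\rangle\le K$, so $K=G$, a contradiction. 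Therefore every proper characteristic subgroup of $G$ is cyclic, and $G$ is a $\mathrm{CCS}$ group.

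The crux is the converse, and within it the verification that each $\varphi_s$ is a genuine automorphism: this rests entirely on the divisibility $mp\mid 1+k+\cdots+k^{p-1}$, which is exactly where the hypothesis $(k-1,m)=1$ enters (together with $p\mid k-1$). Once these automorphisms are available, the normalisation of a complement element to the shape $x^ay$ and the conjugation identity finish the argument quickly, the only mild bookkeeping being in that normalisation. I note that this streamlined route does not use Lemma~\ref{lemma:numeric}; that lemma would instead be needed if one preferred to work with a general element $z=x^ay^b$ and to control $\gcd(1+k+\cdots+k^{b-1},m)$ rather than reducing to the case $b=1$.
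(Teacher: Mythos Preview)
Your proof is correct, and in both directions it follows a genuinely different line from the paper's argument.

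For the implication ``$\mathrm{CCS}\Rightarrow(k-1,m)=1$'', the paper picks a prime $q\mid(k-1,m)$, sets $H=\langle x^q,y\rangle$, and shows directly---by analysing the shape of an arbitrary automorphism of $G$---that $H$ is characteristic and non-cyclic. You instead compute $G'=\langle x^{k-1}\rangle$, pass to the abelianisation $G/G'\cong C_d\times C_p\times C_p$, and pull back its (characteristic, non-cyclic) Sylow $p$-subgroup via Lemma~\ref{lemma:charQuot}. Your route is shorter and more conceptual; the paper's has the mild advantage of exhibiting the offending subgroup explicitly in terms of the generators.

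For the converse, the paper invokes Lemma~\ref{lemma:numeric} to deduce $(k^u-1,m)=1$ for every $u\in\{1,\dots,p-1\}$, concludes that \emph{every} element outside $\langle x\rangle$ has order $p$, and then builds automorphisms moving an arbitrary $x^ay^b$. You sidestep Lemma~\ref{lemma:numeric} entirely by first replacing an element $x^{a'}y^{b'}\in K\setminus\langle x\rangle$ with a suitable power to force $b=1$, and then using only the one-parameter family $\varphi_s\colon y\mapsto yx^s$; the divisibility $mp\mid 1+k+\cdots+k^{p-1}$ (which needs only the case $u=1$) is all you require. The computation $\varphi_s(z)z^{-1}=x^{sk}$ then finishes the argument in one line. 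This is a genuine simplification: the number-theoretic Lemma~\ref{lemma:numeric} becomes unnecessary, exactly as you observe in your final remark.
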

	\begin{proof}
		Observe that since $k^p-1 \equiv 0 \mod mp$, we have that $k^p-1\equiv0 \mod m$ and $k^p-1\equiv0 \mod p$. By Fermat's Little Theorem, we have that $k^p \equiv k \mod p$, so that $k\equiv1 \mod p$. Thus, since $k \not \equiv 1 \mod mp$ and $(m,p)=1$, we have that $k\not \equiv 1 \mod m$ and $k^p \equiv 1 \mod m$.\\
		
		Suppose now that $G$ is a $\mathrm{CCS}$ group. Aiming for a contradiction, suppose that there exists a prime $q$ dividing $(k-1,m)$. Take the subgroup
		\[
		H = \langle x^q,y \rangle.
		\]
		Let $\varphi \in \mathrm{Aut}(G)$. Since $\langle x \rangle\, \mathrm{char} \,G$, we have that $\varphi(x) = x^r$, with $(r,mp)=1$. Let now $\varphi(y) = x^a y^b$, for some integers $a,b$. Observe that, since $\varphi(yxy^{-1}) = \varphi(x^k)$, we have
		\[
		x^a y^b x^r y^{-b} x^{-a} = x^{kr},
		\]
		that is
		\[
		x^{rk^b} = x^{rk}.
		\]
		This means that $rk^b \equiv rk \mod mp$. Since $(r,mp)=1$, we obtain $k^b \equiv k \mod mp$. Multiplying both sides by $k^{p-1}$, we obtain that $k^{b-1}\equiv1 \mod mp$, so that $b\equiv1 \mod p$. Thus, we may suppose that $\varphi(y) = x^a y$.\\
		Next, observe that $\varphi(y)^p = 1$, that is,
		\[
		x^{a(1+k+\cdots+k^{p-1})} = 1,
		\]
		so that $q|mp|a(1+k+\cdots+k^{p-1})$. But $q$ divides $k-1$, meaning that $k \equiv 1 \mod q$, and so
		\[
		1+k+\cdots+k^{p-1} \equiv p \mod q.
		\]
		Since $p<q$, we obtain that $q|a$. In particular, $\varphi(y) = x^{qt}y \in H$. This shows that $H$ is a characteristic subgroup of $G$, and then cyclic.  This implies that $\langle y \rangle$ is a normal subgroup of $G$, and therefore $G$ would be cyclic, a contradiction. Therefore, $(k-1,m)=1$.\\
		Conversely, suppose that $(k-1,m)=1$. By Lemma~\ref{lemma:numeric}, we have that $(k^u-1,m)=1$ for every $u \in \{1,\dots,p-1\}$. Now, since $k^p \equiv 1\mod m$, we have that $k^{up}-1 \equiv 0 \mod m$, and so
		\[
		k^{up}-1 = (k^u-1)(1+k^u+\cdots+k^{u(p-1)}) \equiv 0 \mod m.
		\]
		Since $(k^u-1,m)=1$, we obtain
		\[
		1+k^u+\cdots+k^{u(p-1)} = 0 \mod m.
		\]
		for every $u \in \{1,\dots,p-1\}$. Observe now that since $k=1 \mod p$, we have 
		\[
		1+k^u+\cdots+k^{u(p-1)} = p \equiv 0 \mod p,
		\]
		and thus
		\[
		1+k^u+\cdots+k^{u(p-1)} \equiv 0 \mod mp.
		\]
		Let now $g \in G \setminus \langle x \rangle$. Then, $g = x^ay^b$, with $b \not \equiv 0 \mod p$. Then,
		\[
		g^p = x^{a(1+k^b+\cdots+k^{b(p-1)})} = 1.
		\]
		Therefore, every element in $G$ which is not contained in $\langle x \rangle$ has order $p$. Now let $H$ be a subgroup of $G$. If $H \leq \langle x \rangle$, of course $H$ is cyclic. Suppose that $H \not \leq \langle x \rangle$, so that $ H = \langle x^d, x^ay^b\rangle$. Let $\varphi: G \to G$ sending $x$ to $x$ and $x^ay^b$ to $y$. This defines an automorphism of $G$, since $x^ay^b$ has order $p$, and it is easy to see that it is bijective. \\
		Now if $a \neq 0 \mod mp$ we have $\varphi(H) = \langle x^d,y \rangle \neq \langle x^d,x^ay^b \rangle$, and therefore $H$ is not characteristic. If $a=0$, then $H = \langle x^d,y^b \rangle = \langle x^d, y \rangle$. Consider now the automorphism $\psi$ of $G$ sending $x$ to $x$ and $y$ to $xy$. Again this is well defined. Suppose that $H$ is characteristic. Then, $\langle x^d,y \rangle = \langle x^d,xy \rangle$, and this implies that $y \in \langle x^d,xy \rangle$, a contradiction. Thus, every characteristic subgroup of $G$ is contained in $\langle x \rangle$, and therefore it is cyclic.
	\end{proof}
	\subsection{The case $P = C_{p^{\alpha}}$}
	\mbox{}\\
	Suppose that $G \cong C_{m} \rtimes C_{p^{\alpha}}$. From Theorem~\ref*{sec2:thm:1}, we know that $\mathrm{F}(G)$ is a maximal subgroup of $G$ of order $mp^{\alpha-1}$. Take $H \le \mathrm{F}(G)$ of order $p^{\alpha-1}$. Then $H \le Q$ for some Sylow $p$-subgroup $Q$ of $G$, with $Q = \langle y \rangle$ and $H =  \langle y^p \rangle$. Thus, $G = C_m \rtimes_{\psi} Q$, for some homomorphism $\psi : Q \to \mathrm{Aut}(C_m)$, with $\psi(y)(x) = x^k$, where $x$ is a generator of $C_m$. In particular, $k^{p^\alpha} = 1 \mod m$. In other words, we have that
	\[
	G = \langle x,y \, | \, x^m = y^{p^\alpha} = 1, \, yxy^{-1} = x^k\rangle.
	\]
	\begin{proposition}
		Let $m,k,p,\alpha$ integers with $p$ prime, $(m,k)=(m,p)=1$ and $k^{p^\alpha} \equiv 1 \mod m$. Consider the group
		\[
		G =\langle x,y \ | \ x^{m}=y^{p^{\alpha}}=1, yxy^{-1}=x^{k} \rangle.
		\]
		Then $G$ is a $\mathrm{CCS}$ group if, and only if, $(m,k-1)=1$ and $k^p \equiv 1 \mod m$.
	\end{proposition}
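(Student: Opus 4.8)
The plan is to prove the two implications separately. We freely assume $k\not\equiv 1\bmod m$ (and hence $m>1$): a cyclic group is never a $\mathrm{CCS}$ group, which takes care of the forward direction, while in the converse the hypothesis $(m,k-1)=1$ together with $m>1$ already forces $k\not\equiv 1\bmod m$; the degenerate case $m=1$, in which $G\cong C_{p^\alpha}$ is cyclic, is excluded by the standing setup. As noted before the statement, $\langle x\rangle=C_m$ — the set of elements of $G$ of order prime to $p$ — is a characteristic subgroup of $G$ (by Lemmas~\ref{lemmasupersolv1} and~\ref{lemmasupersolv2}).

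For the forward direction, assume $G$ is a $\mathrm{CCS}$ group. First, $\mho_1(G)=\langle x,y^p\rangle$ is a characteristic subgroup of $G$ of order $mp^{\alpha-1}<|G|$, hence proper, hence cyclic; but $\langle x,y^p\rangle\cong C_m\rtimes C_{p^{\alpha-1}}$ with $y^p$ acting by the $k^p$-th power map, and a semidirect product of two cyclic groups of coprime orders is cyclic only when the action is trivial, so $k^p\equiv 1\bmod m$. Next, suppose a prime $q$ divides $\gcd(m,k-1)$. Then $\langle x^q\rangle$, the unique subgroup of index $q$ of the characteristic subgroup $\langle x\rangle\cong C_m$, is characteristic in $G$, and the quotient $G/\langle x^q\rangle$ has order $qp^\alpha$; since $k\equiv 1\bmod q$ the induced action there is trivial, so $G/\langle x^q\rangle\cong C_q\times C_{p^\alpha}\cong C_{qp^\alpha}$. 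This cyclic group has order divisible by the two distinct primes $q$ and $p$, so it is neither characteristically simple nor — being cyclic — a $\mathrm{CCS}$ group, contradicting Lemma~\ref{lemma:quot}. Hence $\gcd(m,k-1)=1$.

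For the converse, assume $(m,k-1)=1$ and $k^p\equiv 1\bmod m$. Then $G$ is non-abelian, hence non-cyclic, and $\mho_1(G)=\langle x,y^p\rangle$ is a proper nontrivial characteristic subgroup, so $G$ is not characteristically simple. Because $k^p\equiv 1\bmod m$, the subgroup $\langle x,y^p\rangle\cong C_m\times C_{p^{\alpha-1}}\cong C_{mp^{\alpha-1}}$ is cyclic, so it suffices to show that every proper characteristic subgroup $K$ of $G$ lies inside it. Suppose not, and pick $x^ay^b\in K$ with $p\nmid b$. For each integer $c$, the assignment $x\mapsto x$, $y\mapsto x^cy$ extends to an automorphism of $G$: the relation $yxy^{-1}=x^k$ is preserved, and $(x^cy)^{p^\alpha}=x^{c\,(1+k+\cdots+k^{p^\alpha-1})}=1$ since $(k-1)(1+k+\cdots+k^{p^\alpha-1})=k^{p^\alpha}-1\equiv 0\bmod m$ and $(k-1,m)=1$. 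A direct computation shows that the image of $x^ay^b$ under this automorphism, times $(x^ay^b)^{-1}$, equals $x^{c s_b}$ with $s_b=1+k+\cdots+k^{b-1}$; hence $x^{c s_b}\in K$ for all $c$, so $\langle x^{\gcd(s_b,m)}\rangle\le K$. Now $(k-1,m)=1$ gives $\gcd(s_b,m)=\gcd(k^b-1,m)$, and since $k^b\equiv k^{b\bmod p}\bmod m$ with $b\bmod p\in\{1,\dots,p-1\}$, Lemma~\ref{lemma:numeric} — applicable because every prime dividing $m$ exceeds $p$ — yields $\gcd(s_b,m)=1$. Thus $\langle x\rangle\le K$, so $y^b\in K$, and as $p\nmid b$ we get $y\in\langle y^b\rangle\le K$, whence $K=G$, a contradiction.

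The real work is in the converse, and the main obstacle there is its final step: exhibiting the automorphisms $y\mapsto x^cy$, checking that they genuinely are automorphisms, and then combining the relation $x^{c s_b}\in K$ with the arithmetic Lemma~\ref{lemma:numeric} to conclude $\gcd(s_b,m)=1$. The structural punchline is that, under the stated conditions, every proper characteristic subgroup of $G$ is trapped inside the cyclic subgroup $\mho_1(G)=\langle x,y^p\rangle$ and is therefore cyclic.
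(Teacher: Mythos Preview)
Your converse (conditions $\Rightarrow$ $\mathrm{CCS}$) is correct and genuinely different from the paper's: where the paper simply cites that $G$ is an $\mathrm{NCS}$ group under these hypotheses, you give a direct self-contained argument in the spirit of Proposition~\ref{prop:CpxCp}. You exhibit the automorphisms $y\mapsto x^cy$ and show, via Lemma~\ref{lemma:numeric}, that any characteristic subgroup meeting $G\setminus\langle x,y^p\rangle$ must be all of $G$. This buys independence from the external $\mathrm{NCS}$ classification, at the cost of essentially repeating the computation already carried out in the $C_p\times C_p$ case.

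Your forward direction ($\mathrm{CCS}$ $\Rightarrow$ conditions) has a genuine gap in the step establishing $(m,k-1)=1$. You appeal to Lemma~\ref{lemma:quot} to rule out the cyclic quotient $G/\langle x^q\rangle\cong C_{qp^\alpha}$, but that lemma as stated is incomplete: it omits the possibility that $G/N$ is cyclic, and this possibility genuinely occurs---for instance, take $N=\langle x\rangle$ in any $\mathrm{CCS}$ group of the present type with $\alpha\ge 2$, giving $G/N\cong C_{p^\alpha}$, which is cyclic yet neither characteristically simple nor $\mathrm{CCS}$. So your quotient produces no contradiction. The repair is short and is exactly what the paper does: the characteristic Sylow $p$-subgroup of your cyclic quotient lifts via Lemma~\ref{lemma:charQuot} to the proper characteristic subgroup $H=\langle x^q,y\rangle$ of $G$; cyclicity of $H$ then makes its Sylow $p$-subgroup $\langle y\rangle$ characteristic in $G$, whence $G\cong\langle x\rangle\times\langle y\rangle$ is cyclic, the desired contradiction. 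Your derivation of $k^p\equiv 1\bmod m$ from cyclicity of $\langle x,y^p\rangle$ is fine and matches the paper's argument via the Fitting subgroup.
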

	\begin{proof}
		If $(m,k-1)=1$ and $k^p \equiv 1 \mod m$, then $G$ is an $\mathrm{NCS}$ group (see \cite[Corollary~$1.8$]{Bia} or~\cite[Theorem~$1.1$]{DM}). Therefore, $G$ is also a $\mathrm{CCS}$ group.\\
		Suppose now that $G$ is a $\mathrm{CCS}$ group. Observe that $\mathrm{F}(G) = \langle x,y^p \rangle$. In particular, $x$ and $y^p$ commute, that is $x^{k^p} = x$. This implies that $k^{p} \equiv 1 \mod m$.\\ Arguing as in the proof of Proposition~\ref{prop:CpxCp}, we see that if there exists a prime $q$ dividing $(m,k-1)$, then the subgroup $H = \langle x^q,y \rangle$ is a characteristic subgroup of $G$. Thus, $H$ must be cyclic, and so $\langle y \rangle$ is a normal subgroup of $H$. Therefore, $\langle y \rangle$ is normal in $G$, implying that $G$ is cyclic, a contradiction.
	\end{proof}
	
	\subsection{The case $P = D_{2^\alpha}$}
	\mbox{}\\
	Suppose that $G \cong C_m \rtimes D_{2^\alpha}$. By Theorem~\ref{sec2:thm:1}, we know that $\mathrm{F}(G)$ is a maximal subgroup of $G$ of order $m2^{\alpha-1}$. Take $H \le \mathrm{F}(G)$ of order $2^{\alpha-1}$. Then $H \leq Q$ for some Sylow $2$-subgroup $Q$ of $G$, with $Q = \langle r,s \, | \, r^{2^{\alpha-1}} = s^2 = 1, srs=r^{-1}\rangle$, and $H = \langle r \rangle$ (since $H$ is a cyclic maximal subgroup of the dihedral group $Q$, we can not have $s \in H$). Thus, $G = C_m \rtimes_{\psi} Q$, for some homomorphism $\psi : Q \to \mathrm{Aut}(C_m)$, with $\psi(r)(c) = c^t$ and $\psi(s)(c) = c^\ell$, where $c$ is a generator of $C_m$. Observe now that $\mathrm{F}(G) = C_{2^{\alpha-1}m} = \langle c, r \rangle$, therefore $c$ and $r$ commute, implying that $t=1$. Moreover, $\psi(s)$ has to have order $2$, and thus $\ell = -1$. In conclusion, we have
	\[
	G = \langle c,r,s \ | \ c^{m}=r^{2^{\alpha-1}}=s^{2}=1,srs=r^{-1},cr=rc,scs=c^{-1} \rangle.
	\]
	Let $x=cr^{2}$ and $y=s$. An easy computation shows that $x^{m2^{\alpha-1}}= y^2 = 1$ and that  $xy=yx^{-1}$. Therefore,
	\[
	G = \langle x,y \, | \, x^{m2^{\alpha-1}}= y^2 = 1, yxy = x^{-1}\rangle.
	\] 
	This shows that $G$ is the dihedral group of order $2^\alpha m$.\\
	Conversely, it is a well known fact that all the characteristic subgroup of any dihedral group are contained in the cyclic group generated by the rotation, and therefore all of the dihedral groups are $\mathrm{CCS}$ groups. 
	\subsection{The case $P = Q_{2^\alpha}$}
	\mbox{}\\
	Suppose that $G \cong C_{m} \rtimes_{\varphi} Q_{2^{\alpha}}$. Since $m$ is odd, $G$ is a split extension of a cyclic group and a quaternion group. In other words, $G$ is a dicyclic group. \\
	Conversely, it is a well known fact that each characteristic subgroup of a dicyclic group generated by $x$ and $y$, with $y$ being the generator of order $2$, is contained in $\langle x \rangle$, and therefore it is a $\mathrm{CCS}$ group.
	\subsection{Conclusion} 
	The discussion above allows us to obtain the following characterization of $\mathrm{CCS}$ non-nilpotent group $G$ with $G' < G$.
	
	\begin{proposition}
		Let $G$ be a non-nilpotent $\mathrm{CCS}$ group with $G^{'} \ne G$ and with $|G|= p_1^{\alpha_1}p_2^{\alpha_2}\cdots p_t^{\alpha_t}$, where $p_1 < p_2 < \dots < p_t$ are distinct primes, $\alpha_i$ are non-negative integers and $t>1$. Set $p=p_1$, $\alpha= \alpha_1$ and $m=p_2^{\alpha_2}\cdots p_t^{\alpha_t}$. Then  one of the following occurs. 
		\begin{enumerate}[label=\roman*.]
			\item $G$ has presentation
			\[
			\langle x,y \, | \, x^{mp} = y^p = 1, yxy^{-1} = x^k \rangle,
			\]
			where $m,k$ are positive integers, $p$ the smallest prime dividing the order of $G$, $(m,p)=1$, $k^p \equiv 1 \mod mp$, $k \not \equiv 1 \mod mp$, and $(k-1,m)=1$.
			\item $G$ has presentation 
			\[  \langle x,y \ | \ x^m=y^{p^{\alpha}}=1,yxy^{-1}=x^{k} \rangle,
			\]
			where $m,k$ are positive integers, $p$ the smallest prime dividing the order of $G$, $(k-1,m)=1$ and $k^{p} \equiv 1 \mod m$. 
			\item $p=2$, and $G$ is either the dicyclic group of order $2^\alpha m$, or $G$ is the dihedral group of order $2^\alpha m$.
		\end{enumerate}
		Moreover, each of these groups is a $\mathrm{CCS}$ group.
	\end{proposition}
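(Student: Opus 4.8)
The plan is to assemble the case analysis of the four preceding subsections; once the possible isomorphism types of a Sylow $p$-subgroup of $G$ have been pinned down, the proof is mostly bookkeeping. First I would recall the reductions that are already in force by the time we reach the statement. Since $G' < G$ is cyclic, Lemma~\ref{lemmasupersolv1} makes $G$ supersolvable, so by Lemma~\ref{lemmasupersolv2} the $p$-regular elements form a characteristic Hall subgroup; this subgroup is therefore cyclic of order $m$ and admits a complement, giving $G = C_m \rtimes P$ with $P$ a Sylow $p$-subgroup. By Theorem~\ref{sec2:thm:1} we have $[G : \mathrm{F}(G)] = p$, and since $C_m \leq \mathrm{F}(G)$ this forces $\mathrm{F}(G)/C_m$ to be a cyclic subgroup of index $p$ in $P \cong G/C_m$; in particular $P$ has a cyclic maximal subgroup.

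Next I would determine $P$. By Lemma~\ref{lemma:quot}, $P \cong G/C_m$ is either characteristically simple or a $\mathrm{CCS}$ $p$-group, hence elementary abelian or one of the $p$-groups classified in Section~\ref{sec:1}. Intersecting this with the classical list of $p$-groups possessing a cyclic subgroup of index $p$ (\cite[Chapter~$5$, Theorem~$4.4$]{Go}), I would delete the members of that list that are neither $\mathrm{CCS}$ nor characteristically simple: the group $C_{p^{\alpha-1}} \times C_p$ with $\alpha \geq 3$ (abelian, non-cyclic, not of exponent $p$, so $\Omega_1$ is a proper non-cyclic characteristic subgroup), the modular group of order $p^\alpha$ (which likewise carries a proper non-cyclic characteristic subgroup), and, when $p = 2$, the semidihedral group, which is excluded by Lemma~\ref{lemma:semidihedral}. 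What survives is
\[
    P \in \{\, C_p \times C_p,\ C_{p^\alpha},\ D_{2^\alpha},\ Q_{2^\alpha} \,\}.
\]

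I would then dispatch the four cases one by one, which is exactly the content of the four subsections. If $P \cong C_p \times C_p$, the change of generators carried out before Proposition~\ref{prop:CpxCp} rewrites $G$ in the presentation of item~(i), and Proposition~\ref{prop:CpxCp}, together with the arithmetic Lemma~\ref{lemma:numeric}, records which such groups are $\mathrm{CCS}$. If $P \cong C_{p^\alpha}$, the analogous computation and the corresponding proposition of that subsection give item~(ii). If $P \cong D_{2^\alpha}$, a suitable change of generators exhibits $G$ as the dihedral group of order $2^\alpha m$; if $P \cong Q_{2^\alpha}$, then since $m$ is odd the extension splits and $G$ is the dicyclic group of order $2^\alpha m$; these two yield item~(iii). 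For the converse, items~(i) and~(ii) are $\mathrm{CCS}$ by the respective propositions, while every dihedral group and every dicyclic group is $\mathrm{CCS}$ because all of its characteristic subgroups lie in its canonical cyclic normal subgroup of index $2$, hence are cyclic.

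I expect the only step that is more than bookkeeping to be the pruning in the second paragraph: one must check that each group on the list of \cite[Chapter~$5$, Theorem~$4.4$]{Go} lying outside the four retained types really does fail to be $\mathrm{CCS}$ or characteristically simple. The semidihedral case is already handled by Lemma~\ref{lemma:semidihedral} and the abelian case $C_{p^{\alpha-1}} \times C_p$ is immediate, so the one place that needs a short independent argument is the modular group of order $p^\alpha$, where one exhibits an explicit proper non-cyclic characteristic subgroup---for $p$ odd its subgroup $\Omega_1$ of elements of order dividing $p$, which is elementary abelian of rank $2$, and for $p = 2$ its unique non-cyclic maximal subgroup. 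Everything else reduces to quoting the subsection results.
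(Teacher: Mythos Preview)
Your proposal is correct and follows essentially the same route as the paper: the proposition is stated there as a summary of Section~\ref{sec:2}, and the paper's argument is precisely the reduction $G = C_m \rtimes P$ via supersolvability, the bound $[G:\mathrm{F}(G)]=p$ from Theorem~\ref{sec2:thm:1} forcing $P$ to have a cyclic maximal subgroup, the intersection with \cite[Chapter~$5$, Theorem~$4.4$]{Go}, and then the four subsections. You in fact supply more detail than the paper on the pruning step (the paper simply asserts the resulting list for $P$ without explicitly ruling out $C_{p^{\alpha-1}}\times C_p$ or the modular group), so your version is, if anything, slightly more complete.
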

	\section{Perfect $\mathrm{CCS}$ groups}\label{sec:3}
	
	Suppose that $G$ is a perfect $\mathrm{CCS}$ group. We require the following known lemma, of which we report the very short proof.
	\begin{lemma}\label{LemmaPerf1}
		Let $G$ be a perfect group and let $N \trianglelefteq G$ cyclic. Then, $N \leq Z(G)$.
	\end{lemma}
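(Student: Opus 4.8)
The plan is to use the fact that a cyclic group has abelian automorphism group, together with the perfectness of $G$. Since $N$ is normal in $G$, conjugation gives a homomorphism $\varphi \colon G \to \mathrm{Aut}(N)$ with kernel $\mathbf{C}_G(N)$. The crucial point is that $\mathrm{Aut}(N)$ is abelian: if $N$ is finite cyclic of order $n$, then $\mathrm{Aut}(N) \cong (\mathbb{Z}/n\mathbb{Z})^{\times}$, while if $N$ is infinite cyclic then $\mathrm{Aut}(N) \cong C_2$; in either case it is abelian.

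Next I would observe that $\varphi$ must be trivial. Its image lies in the abelian group $\mathrm{Aut}(N)$, so $\varphi$ factors through the abelianization $G/G'$; but $G = G'$ since $G$ is perfect, hence $G/G' = 1$ and $\varphi$ is the trivial homomorphism. Equivalently $\mathbf{C}_G(N) = G$, which is exactly the assertion that $N \leq Z(G)$.

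There is no genuine obstacle here; the only care needed is to cover uniformly the (for the applications irrelevant) cases where $N$ is trivial or infinite, but the argument above does not distinguish them. Since in this paper both $G$ and $N$ are finite, one may simply invoke the abelianity of $\mathrm{Aut}(C_n)$ directly.
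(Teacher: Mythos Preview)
Your argument is correct and is essentially identical to the paper's: both use that conjugation induces a homomorphism $G \to \mathrm{Aut}(N)$ with kernel $\mathbf{C}_G(N)$, observe that $\mathrm{Aut}(N)$ is abelian because $N$ is cyclic, and conclude from $G = G'$ that $\mathbf{C}_G(N) = G$. The only cosmetic difference is that the paper phrases the first step via the $N/C$ lemma ($G/\mathbf{C}_G(N) \hookrightarrow \mathrm{Aut}(N)$) rather than via factoring through the abelianization.
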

	\begin{proof}
		Since $N$ is normal in $G$, we have $\mathbf{N}_G(N)/\mathbf{C}_G(N) = G/\mathbf{C}_G(N) \leq \mathrm{Aut}(N)$. $N$ is cyclic, so its automorphism group is abelian. Therefore, $G/\mathbf{C}_G(N)$ is abelian, implying that $\mathbf{C}_G(N) \geq G' = G$, so that $\mathbf{C}_G(N) = G$, which is $N \leq Z(G)$.
	\end{proof}	
	To continue our analysis, we need some basic results about perfect central extensions and the Schur multiplier. \\
	Recall that a central extension of a group $G$ is a pair $(H,\alpha)$ such that we have the following short exact sequence:
	\[
	1 \to Z \to H \xrightarrow{\alpha} G \to 1,
	\]	
	where $Z \leq Z(H)$. If $H$ is a perfect group, then the central extension is said to be perfect.\\
	If $G$ is a perfect group, then $G$ admits a special central extension, called \emph{universal central extension} of $G$, denoted with $(\tilde{G},\pi)$. The kernel of $\pi$ is called the \emph{Schur multiplier} of $G$, and it is denoted by $H^2(G,\mathbb{Z})$. This extension has the property that for any other perfect central extension of $G$, say $(H,\varphi)$, $\ker \varphi$ is a quotient of $H^2(G,\mathbb{Z})$. We refer the reader to \cite{As} for a detailed discussion of this material.\\
	We report another known lemma, which will be used in the proof of the upcoming theorem. For a reference, see~\cite{Sch}.
	\begin{lemma}
		Let $G,H$ be two finite groups. Then,
		\[
		H^2(G \times H,\mathbb{Z}) \cong (H^2(G,\mathbb{Z})\times H^2(H,\mathbb{Z}))\times\left(\frac{G}{G'}\otimes \frac{H}{H'}\right).
		\]
		In particular, if $G$ and $H$ are perfect groups, then
		\[
		H^2(G \times H,\mathbb{Z}) \cong H^2(G,\mathbb{Z})\times H^2(H,\mathbb{Z}).
		\]
	\end{lemma}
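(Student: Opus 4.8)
The plan is to pass from the Schur multiplier to group homology and then invoke the Künneth theorem. Recall that for any group $G$ the Schur multiplier $H^2(G,\mathbb{Z})$ is isomorphic, as an abelian group, to the second integral homology group $H_2(G,\mathbb{Z}) = H_2(BG,\mathbb{Z})$ of a classifying space $BG$, that $H_1(BG,\mathbb{Z}) \cong G/G'$, and that $H_0(BG,\mathbb{Z}) \cong \mathbb{Z}$. Since a classifying space of $G \times H$ may be taken to be $BG \times BH$, the formula follows from the Künneth formula for the homology of a product of spaces over the PID $\mathbb{Z}$.

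Concretely, I would proceed as follows. First, write down the Künneth short exact sequence in degree $2$,
\[
0 \to \bigoplus_{i+j=2} H_i(BG,\mathbb{Z}) \otimes H_j(BH,\mathbb{Z}) \to H_2(BG\times BH,\mathbb{Z}) \to \bigoplus_{i+j=1}\mathrm{Tor}_1^{\mathbb{Z}}\big(H_i(BG,\mathbb{Z}),H_j(BH,\mathbb{Z})\big)\to 0,
\]
which splits (non-canonically). Next, observe that every $\mathrm{Tor}$ summand on the right vanishes: the index condition $i+j=1$ forces one of $i,j$ to be $0$, and $H_0 \cong \mathbb{Z}$ is free, so $\mathrm{Tor}_1^{\mathbb{Z}}(\mathbb{Z},-) = 0$. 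Hence $H_2(BG\times BH,\mathbb{Z}) \cong \bigoplus_{i+j=2} H_i(BG,\mathbb{Z})\otimes H_j(BH,\mathbb{Z})$. Finally, expand this sum: the $(i,j)=(2,0)$ and $(0,2)$ terms contribute $H_2(G,\mathbb{Z})\otimes \mathbb{Z} \cong H_2(G,\mathbb{Z})$ and $H_2(H,\mathbb{Z})$ respectively, while the $(i,j)=(1,1)$ term contributes $H_1(G,\mathbb{Z})\otimes H_1(H,\mathbb{Z}) \cong (G/G')\otimes(H/H')$. Translating back through $H^2(-,\mathbb{Z}) \cong H_2(-,\mathbb{Z})$ gives exactly
\[
H^2(G\times H,\mathbb{Z}) \cong \big(H^2(G,\mathbb{Z})\times H^2(H,\mathbb{Z})\big)\times\Big(\frac{G}{G'}\otimes\frac{H}{H'}\Big).
\]
For the last assertion, if $G$ and $H$ are perfect then $G/G' = H/H' = 1$, so the tensor factor is trivial and the isomorphism collapses to $H^2(G\times H,\mathbb{Z})\cong H^2(G,\mathbb{Z})\times H^2(H,\mathbb{Z})$.

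This is a classical fact (essentially due to Schur), so there is no serious difficulty; the only points deserving a word of care are the identification of the Schur multiplier with $H_2$ and the splitting of the Künneth sequence — the splitting is not natural, but that is irrelevant here since only an abstract isomorphism of abelian groups is asserted. A purely group-theoretic alternative, which I would mention but not carry out in detail, is to run the Lyndon–Hochschild–Serre spectral sequence of the split extension $1\to G\to G\times H\to H\to 1$ with trivial coefficients and read off $H_2$ from the low-degree terms, all relevant differentials vanishing because the extension splits.
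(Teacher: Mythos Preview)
Your argument is correct. The identification of the Schur multiplier with $H_2(G,\mathbb{Z})$, the use of $B(G\times H)\simeq BG\times BH$, and the Künneth computation in degree~$2$ are all standard and the vanishing of the $\mathrm{Tor}$ terms is handled properly. The deduction for perfect groups is immediate.

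As for comparison: the paper does not give a proof of this lemma at all. It is stated as a known result with a reference to Schur's 1907 paper, and is used as a black box in the subsequent theorem on perfect $\mathrm{CCS}$ groups. Your homological approach via Künneth is the standard modern route and is arguably cleaner than reconstructing Schur's original argument, which predates group cohomology and works directly with projective representations and factor sets. The spectral sequence alternative you mention at the end would also work, but is heavier machinery than needed here; the Künneth argument you actually carry out is the right choice.

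One small remark on notation: the paper writes $H^2(G,\mathbb{Z})$ for the Schur multiplier, which is slightly nonstandard (the Schur multiplier is $H_2(G,\mathbb{Z})$, or equivalently $H^2(G,\mathbb{C}^\times)$, not integral cohomology in degree~$2$). You implicitly handle this by immediately passing to $H_2$, which is the correct object to which Künneth applies; you might make this translation of notation explicit in a sentence to avoid any confusion for a reader taking the superscript literally.
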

	
	\begin{theorem}
		Let $G$ be a finite perfect group. Then $G$ is a $\mathrm{CCS}$ group if, and only if, $Z(G)$ is cyclic and it is the unique maximal characteristic subgroup of $G$. \\
		Moreover, if this happens, $G$ fits into a short exact sequence of the form $$1 \rightarrow C \rightarrow G \rightarrow S \times \cdots \times S$$ where $S$ is a non-abelian simple group and $C$ is a cyclic group isomorphic to a quotient of $H^{2}(S,\mathbb{Z}) \times \cdots \times H^{2}(S,\mathbb{Z})$.
	\end{theorem}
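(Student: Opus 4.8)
The plan is to prove the equivalence by two short implications and then read off the ``Moreover'' part. The engine throughout is Lemma~\ref{LemmaPerf1}: in a perfect group every cyclic normal subgroup is central, so for a perfect $G$ the condition ``every proper characteristic subgroup is cyclic'' is equivalent to ``every proper characteristic subgroup is contained in $Z(G)$''. Assume first that $G$ is a perfect $\mathrm{CCS}$ group. Being non-cyclic and perfect, $G$ is non-abelian, so $Z(G)$ is a proper characteristic subgroup; and since $G$ is not characteristically simple it has a non-trivial proper characteristic subgroup $N$, which is cyclic, hence central by Lemma~\ref{LemmaPerf1}, so $Z(G)\neq 1$. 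Thus $Z(G)$ is a non-trivial proper characteristic subgroup, hence cyclic, and by Lemma~\ref{LemmaPerf1} it contains every proper characteristic subgroup of $G$; being itself proper, it is the unique maximal characteristic subgroup. Conversely, if $G$ is perfect, $Z(G)$ is cyclic and is the unique maximal characteristic subgroup (so in particular $Z(G)$ is a non-trivial proper subgroup, and $G$ is non-cyclic and not characteristically simple), then by finiteness any proper characteristic subgroup of $G$ is contained in a characteristic subgroup maximal among proper ones, hence in $Z(G)$, and is therefore cyclic; so $G$ is a $\mathrm{CCS}$ group.

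For the structural statement, put $\overline{G}=G/Z(G)$; this is a perfect group (hence non-trivial, as $G$ is non-abelian), and I claim it is characteristically simple. Indeed, let $\overline{K}$ be a non-trivial characteristic subgroup of $\overline{G}$ with preimage $K\le G$. By Lemma~\ref{lemma:charQuot}, $K$ is characteristic in $G$, and $Z(G)\lneq K$; since $Z(G)$ is the unique maximal characteristic subgroup of $G$, we must have $K=G$, i.e.\ $\overline{K}=\overline{G}$. A finite characteristically simple group is a direct product of copies of a fixed simple group, and perfectness excludes the (elementary abelian) abelian case; hence $\overline{G}\cong S^{k}$ for some non-abelian simple group $S$ and some $k\ge 1$, which gives the exact sequence $1\to Z(G)\to G\to S\times\cdots\times S\to 1$. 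Finally, this exhibits $G$ as a perfect central extension of $S^{k}$: the kernel $Z(G)$ is central and $G$ is perfect. By the property of the universal central extension recalled above (for a perfect group $Q$, any perfect central extension has kernel a quotient of $H^{2}(Q,\mathbb{Z})$), $Z(G)$ is a quotient of $H^{2}(S^{k},\mathbb{Z})$, and since $S$ is perfect the product formula for Schur multipliers recalled above gives $H^{2}(S^{k},\mathbb{Z})\cong H^{2}(S,\mathbb{Z})\times\cdots\times H^{2}(S,\mathbb{Z})$. Taking $C=Z(G)$ completes the argument.

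I expect the only genuinely delicate point to be the passage between the characteristic subgroups of $G$ and those of $\overline{G}=G/Z(G)$: Lemma~\ref{lemma:charQuot} lets us lift characteristic subgroups of the quotient to characteristic subgroups of $G$ but not conversely, so the proof that ``$\overline{G}$ is characteristically simple'' must be arranged to use only that direction together with the uniqueness of the maximal characteristic subgroup of $G$. Everything else—the classification of finite characteristically simple groups, and the universal-central-extension/Schur-multiplier input—is quoted from the preliminaries. One should also keep in mind the degenerate reading in which $Z(G)=1$ (i.e.\ $G$ is already characteristically simple); the hypothesis ``$Z(G)$ is the unique maximal characteristic subgroup'' is to be understood as excluding this case, which is precisely what the $\mathrm{CCS}$ property forces in the forward direction.
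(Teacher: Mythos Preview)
Your proof is correct and follows essentially the same approach as the paper: both directions of the equivalence hinge on Lemma~\ref{LemmaPerf1}, and the structural statement is obtained by showing $G/Z(G)$ is characteristically simple (via Lemma~\ref{lemma:charQuot}), identifying it as $S^{k}$ with $S$ non-abelian simple, and then invoking the universal central extension together with the product formula for the Schur multiplier. If anything, your write-up is more careful than the paper's in verifying that $Z(G)$ is non-trivial and proper, in spelling out the converse, and in flagging the correct direction of Lemma~\ref{lemma:charQuot}.
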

	\begin{proof}
		Suppose first that $G$ is a $\mathrm{CCS}$ group. Let $N$  be a characteristic subgroup of $G$. Then, $N$ is cyclic, and by Lemma~\ref{LemmaPerf1} it is contained in the center. Thus, $Z(G)$ is the unique maximal characteristic subgroup of $G$. The viceversa is trivially true.\\
		Suppose now that $G$ is a $\mathrm{CCS}$ group. Since $Z(G)$ is the unique maximal characteristic subgroup of $G$, $G/Z(G)$ is characteristically simple, that is
		\[
		\frac{G}{Z(G)} \cong S \times \cdots \times S \rightarrow 1,
		\]
		for some finite simple group $S$. In particular, we have a short exact sequence of the form
		\[
		1 \rightarrow Z(G) \rightarrow G \rightarrow S \times \cdots \times S.
		\]
		Note that $S$ is not abelian, since otherwise $G$ is solvable. Since $S$ is non-abelian simple, it is perfect, and so is $S \times \cdots \times S$. Thus, this admits the universal central extension, say $(A, \pi)$. Then we have an exact sequence: $$ 1 \rightarrow H^{2}(S \times \cdots \times S,\mathbb{Z}) \rightarrow A \rightarrow S \times \cdots \times S \rightarrow 1.$$ Since also $G$ is a perfect central extension of $S \times \cdots \times S$ we get that $$Z(G) \cong H^{2}(S \times \cdots \times S,\mathbb{Z})/N \cong [H^{2}(S,\mathbb{Z}) \times \cdots \times H^{2}(S,\mathbb{Z})]/N.$$ 
	\end{proof}		
	For example, suppose that $G$ is a $\mathrm{CCS}$ group that arises as a perfect central extension of $ S \times S $, where $ S = A_5$. Since $H^2(S, \mathbb{Z}) \cong C_2$, the group $G$ fits into a short exact sequence of the form
	\[
	1 \rightarrow \frac{C_2 \times C_2}{N} \rightarrow G \rightarrow A_5 \times A_5 \rightarrow 1,
	\]
	for some normal subgroup $ N \trianglelefteq C_2 \times C_2$, where $ (C_2 \times C_2)/N = Z(G) $. \\
	Because $ G $ is a  $\mathrm{CCS}$ group, its center $ Z(G) $ must be cyclic. This implies that $ N \cong C_2 $ and therefore $ Z(G) \cong C_2 $.
	 A group of this type appears in the \texttt{PerfectGroup} library of \textsc{GAP}~\cite{GAP}, with identifier $(7200, 2)$.

\section{Proofs of Theorems \ref{Th2} and \ref{thm:skbr}}\label{sec:skew}
Let $(B,+,\cdot)$ be a skew left braces  as in the Introduction. The additive and multiplicative groups of $B$ are related by the so-called \emph{lambda map}:
\[
\lambda \colon (B, \cdot) \longrightarrow \mathrm{Aut}(B,+), \quad a \mapsto \lambda_a,
\]
where, for each $a \in B$,
\[
\lambda_a \colon B \longrightarrow B, \quad b \mapsto -a + a \cdot b.
\]
It is shown in \cite{GV17} that $\lambda$ is a well-defined group homomorphism. The importance of this map lies in the fact that it allows one to express the additive operation in terms of the multiplicative one and vice versa:
\[
a + b = a \cdot \lambda_a(b), \qquad a \cdot b = a + \lambda_a(b).
\]
A subset $S \subseteq B$ is called a \emph{subbrace} of $B$ if $S$ is a subgroup of both $(B,+)$ and $(B,\cdot)$. In that case we write $S \le B$. Note that in this way the triple $(S,+,\cdot)$ is a skew brace. Finally note that in a skew left brace $(B,+,\cdot)$, the identity element of the additive group $(B,+)$ coincides with the identity element of the multiplicative group $(B,\cdot)$. Indeed let $0$ denote the identity element in the additive group and $1$ the identity element in the  
multiplicative group. From the brace identity $a\cdot(b+c)=a\cdot b-a+a\cdot c$, setting $b=c=0$ we obtain
\[
a\cdot 0 = a\cdot 0 - a + a\cdot 0.
\]
Simplifying, it follows that $a=a\cdot 0$ for all $a\in B$. In particular, taking $a=1$ yields $1=1\cdot 0=0$, and hence the identity elements of $(B,+)$ and $(B,\cdot)$ coincide.

\begin{lemma} \label{lemmachar}
Let $(B,+,\cdot)$ be a skew brace and $H$ a characteristic subgroup of $(B,+)$. Then $(H,\cdot) \le (B,+)$. In particular $H$ is a subbrace of $B$.
\end{lemma}

\begin{proof}
	Recall that for every $b \in B$, the map $
	\lambda_b \in \operatorname{Aut}(B,+)$.
	Since $H$ is characteristic in $(B,+)$, it follows that for all $b \in B$ and $h \in H$, $\lambda_b(h) \in H$. Take $h,h' \in H$. By the definition of the $\lambda$-map we have
	$h \cdot h' = h + \lambda_h(h')$, which is therefore an element of $H$. Moreover:
	\[
	\lambda_h(h^{-1})=-h+h\cdot h^{-1}=-h+1.
	\]
	Since the additive and multiplicative identities of $B$ coincide, $1=0$, it follows that
	\[
	\lambda_h(h^{-1})=-h.
	\]
	Applying $\lambda_h^{-1}$ to both sides yields
	\[
	h^{-1}=\lambda_h^{-1}(-h),
	\]
Since $-h\in H$ and $\lambda_h^{-1}$ is again an automorphism of $(B,+)$, it follows that $h^{-1}\in H$.

\end{proof}

A subset $I \subseteq B$ is called an \emph{ideal} if it is a normal subgroup of both $(B,+)$ and $(B,\cdot)$, and 
\[
\lambda_a(I) \subseteq I \quad \text{for all } a \in B.
\]
In that case we write $I \trianglelefteq B$. The importance of ideals lies in the fact that they allow one to consider quotients of skew braces: The \emph{quotient skew brace}  $B/I$ is defined as the set 
\[
B/I = \{ b + I = b \cdot I \mid b \in B \}.
\]

The operations on $B/I$ are defined by
\[
(b + I) + (b' + I) := (b + b') + I,
\]
\[
(b + I) \cdot  (b' + I) := (b \cdot b') + I,
\]
for all $b, b' \in B$. With these operations, $B/I$ becomes a skew brace, called the \emph{quotient skew brace of $B$ modulo $I$}. Note that $(B/I,+) \simeq (B,+)/(I,+)$ and $(B/I,\cdot) \simeq (B,\cdot)/(I,\cdot)$.

The notions of homomorphism and isomorphism between skew braces are the natural extensions of the corresponding notions for groups. In particular, a homomorphism of skew braces is a map preserving both the additive and the multiplicative structures. As in group theory, isomorphisms are bijective homomorphisms.
Moreover , the classical isomorphism theorems extend to the setting of skew braces: kernels, images, and quotients by ideals behave analogously to the group-theoretic case, and the First, Second, and Third Isomorphism Theorems hold in this context.

 We say that $B$ is a \emph{two-sided skew brace} if, for all $a,b,c \in B$, 
\[
(a+b)\cdot c = a \cdot c - c + b \cdot c.
\]

For any $a,b \in B$, we define the \emph{star product}
\[
a * b := \lambda_a(b) - b = -a + a\cdot b - b.
\]
Intuitively, $*$ measures the difference between the additive and multiplicative structures of $B$. The skew brace $B$ is called \emph{trivial} if $a*b=0$ for all $a,b \in B$, which is equivalent to $a+b = a\cdot b$ for all $a,b \in B$. In particular, if $B$ has prime order $p$, then $B$ is trivial.
For subsets $X,Y \subseteq B$, we define
\[
X * Y := \langle x * y \mid x \in X,\, y \in Y \rangle_+,
\]
i.e., the additive subgroup generated by all elements $x*y$.  
For example, $B^2 := B*B$ is an ideal of $B$ and is the minimal ideal such that the quotient $B/(B*B)$ is a trivial skew brace (see \cite[Proposition 2.3]{Cedo2018}).

\begin{proof}[Proof of Theorem \ref{Th2}]
	Suppose the theorem is not true and let $(B,+,\cdot)$ be a minimal counterexample with respect to the order of $B$.
	First, suppose that $(B,+)$ is not characteristically simple. Let $K$ be a proper non trivial characteristic subgroup of $(B,+)$. By Lemma \ref{lemmachar}  $K$ is a subbrace of $B$, and so by induction we have that $(K,+)$ is cyclic. It follows that $(B,+)$ is a $\mathrm{CCS}$ group. Since $|B|$ is an odd prime power, by Theorem \ref{thm:main} we deduce that
	\[
	(B,+) = p_{+}^{1+2n}.
	\] 
	Therefore, $(B,+)$ admits a unique characteristic subgroup, namely its center
	\[
	Z(B,+) = C_p.
	\] 
	Since $(B,\cdot)$ is abelian, $Z(B,+)$ is an ideal of $B$. Hence, by induction,
	\[
	(B,+)/Z(B,+)
	\] 
	is cyclic, and it follows that $(B,+)$ is abelian, a contradiction.
	Therefore, $(B,+)$ is characteristically simple, meaning that 
	\[
	(B,+) = (C_p)^n.
	\] 
	Since $(B,\cdot)$ is abelian, $B$ is a two-sided left brace, and by \cite[Theorem 2.1]{Trappeniers2023}, $B*B$ is properly contained in $B$. 
	 If $B*B = 1$, then $B$ is a trivial brace, and we would have that $(B,+)$ is cyclic. Hence, the order of $B/(B*B)$ is strictly smaller than $|B|$, and by induction we get that
	\[
	(B,+)/(B*B)
	\] 
	is cyclic. Since $(B,+)$ is elementary abelian, we obtain
	\[
	(B,+)/(B*B) = C_p \quad \text{and} \quad B*B = C_p.
	\] 
	Therefore, $|B| = p^2$, and by \cite[Proposition 2.4]{Bachiller2015} we get that $(B,+)$ is cyclic, a contradiction.
	
\end{proof}

Finally, we conclude by proving Theorem \ref{thm:skbr}.

\begin{proof}[Proof of Theorem \ref{thm:skbr}]
	Since $\mathrm{F}(B,+)$, the fitting subgroup of $(B, +)$, is a proper characteristic subgroup of $(B,+)$, it follows from Lemma \ref{lemmachar} that $(\mathrm{F}(B,+),+,\cdot)$ is a sub-skew brace of $(B,+,\cdot)$. By assumption, $(B,+)$ is a non-nilpotent and non-perfect $\mathrm{CCS}$ group. Therefore, by Theorem~\ref{sec2:thm:1}, $(\mathrm{F}(B,+),+)$ is a cyclic group of order $mp^{\alpha-1}$. Let $P_i$ be a Sylow $p_i$-subgroup of $(B,+)$, with $i \in \{2,\dots,t\}$. Since $(B,+)$ is supersolvable, by Lemma~\ref{lemmasupersolv2} the subset of elements of order coprime to $p$ is a characteristic subgroup of $(B,+)$, and so it is contained in $\mathrm{F}(B,+)$. In particular, $P_i \leq \mathrm{F}(B,+)$. Hence, $(P_i,+)$ is a cyclic characteristic subgroup of $(B,+)$, and consequently, by Lemma \ref{lemmachar}, $(P_i,+,\cdot)$ is a sub-skew brace of $(B,+,\cdot)$. Since $(P_i,+)$ is cyclic, from \cite[Corollary of Theorem 2]{Rump2007B} implies that $(P_i,\cdot)$ is cyclic as well. Therefore, the Sylow $p_i$-subgroups of $(B,\cdot)$ are cyclic. 
	Suppose now that $(B,\cdot)$ is nilpotent. Then
	\[
	(B,\cdot) \cong C_{p_2^{\alpha_2}} \times \cdots \times C_{p_t^{\alpha_t}} \times Q 
	\;\cong\; C_m \times Q,
	\]
	where $(Q,\cdot)$ is a Sylow $p$-subgroup of $(B,\cdot)$. Let $H$ be a subgroup of $F(B,+)$ of order $p^{\alpha-1}$. Since $H$ is characteristic in $F(B,+)$, it is also characteristic in $(B,+)$. In particular, by Lemma \ref{lemmachar}, $(H,+,\cdot)$ is a sub-skew brace of $(B,+,\cdot)$. Since $(H,+)$ is cyclic of odd prime power order, by \cite[Corollary of Theorem 2]{Rump2007B}  $(H,\cdot)$ is cyclic. But $H$ is contained in a Sylow $p$-subgroup of $(B,\cdot)$, which has order $p^\alpha$. This completes the proof in the nilpotent case. 
Assume now that $(B,\cdot)$ is non-nilpotent. Since $(\mathrm{F}(B,+),+,\cdot)$ is a sub-skew brace, it follows that $(\mathrm{F}(B,+),\cdot)$ is a subgroup of $(B,\cdot)$ of index $p$, and hence it is normal. Moreover, a similar argument as in the nilpotent case shows that $(\mathrm{F}(B,+),\cdot)$ is a $\mathrm{Z}$-group. In particular, $(\mathrm{F}(B,+),\cdot)$ is supersolvable. 
Therefore, the subset $K$ of elements of $(\mathrm{F}(G),\cdot)$ whose order is coprime to $p$ forms a characteristic subgroup of $(\mathrm{F}(B,+),\cdot)$ of order $m$. Since $K$ is characteristic in a normal subgroup, we deduce that $K \trianglelefteq (B,\cdot)$. Thus,
	\[
	(B,\cdot) \cong K \rtimes Q,
	\]
	where $Q$ is a Sylow $p$-subgroup of $(B,\cdot)$. Finally, note that
	\[
	C_p \;\cong\; \frac{(B,\cdot)}{(\mathrm{F}(B,+),\cdot)} 
	\;\cong\; \frac{(B,\cdot)/K}{(F(B,+),\cdot)/K} 
	\;\cong\; \frac{Q}{C_{p^{\alpha-1}}},
	\]
	which shows that $Q$ contains a maximal cyclic subgroup.
\end{proof}


\begin{thebibliography}{99}
	\bibitem{A} S.~M.~Alshorm, Groups in which the commutator subgroup is cyclic, \textit{International Conference on Mathematics and Computations}, 2022.
	\bibitem{AKT} A.~E.~Antony, P.~Komma, and V.~Z.~Thomas, A property of \( p \)-groups of nilpotency class \( p + 1 \) related to a theorem of Schur, \textit{Israel Journal of Mathematics}, \textbf{247} (2022), 251--267.
	\bibitem{As} M.~Aschbacher, Finite group theory, \textit{Cambridge University Press} Vol. 10, 2000.
	\bibitem{BKP} A.~Ballester-Bolinches, S.~F.~ Kamornikov, V.~Pérez-Calabuig, V.~N.~Tyutyanov, Finite Groups with G-Permutable Schmidt Subgroups, \textit{Mediterr. J. Math}, 20, 174 (2023).
	\bibitem{BERGER} T.~R.~Berger, L.~G.~Kovács, M.~F.~Newman, Groups of prime power order with cyclic Frattini subgroup, \textit{Indagationes Mathematicae (Proceedings)}, vol.~83, no.~1, 1980, pp.~13--18.
	
	\bibitem{BE}  A.~Ballester-Bolinches, R.~Esteban-Romero, On minimal non-supersoluble groups, \textit{Revista Matemática Iberoamericana}, vol. 23, no.1, 2007, pp. 127-142.
	
	\bibitem{BER} A.~Ballester-Bolinches, R.~Esteban-Romero, D.~J.~S.~Robinson, On Finite Minimal Non-Nilpotent Groups, \textit{Proceedings of the American Mathematical Society}, vol. 133, no. 12, 2005, pp. 3455–62.
	\bibitem{Bia} M.~Bianchi, Sui gruppi a sottogruppi normali ciclici, \textit{Istit. Lombardo Accad.
		Sci. Lett. Rend. A}, no. 112(2), 1978, pp. 302–310.
	\bibitem{Bor} D.~Bornand, Elementary abelian subgroups in \( p \)-groups with a cyclic derived subgroup, \textit{Journal of Algebra}, \textbf{335} (2011), no.~1, 301--318.
	\bibitem{C} K.~Conrad, Subgroup series II, \url{https://kconrad.math.uconn.edu/blurbs/grouptheory/subgpseries2.pdf}
	\bibitem{DM} M.~Damele, F.~Mastrogiacomo, Finite groups in which every proper normal subgroup is cyclic, \textit{Advances in Group Theory and Applications}, Accepted for publication. 
	\bibitem{DL}
	M.~Damele and A.~Loi,
\emph{Structural and rigidity properties of Lie skew braces},
arXiv:2507.06214 [math.GR], 2025, to appear in \emph{J. Algebra}.

	\bibitem{FJO} S.~Fouladi, A.~R.~Jamali, and R.~Orfi, On the automorphism group of a finite \( p \)-group with cyclic Frattini subgroup, \textit{Mathematical Proceedings of the Royal Irish Academy}, \textbf{108A} (2008), no.~2, 165--175.
	\bibitem{GAP} The GAP~Group, \textit{GAP -- Groups, Algorithms, and Programming,    Version 4.13.1};  2024,
	\url{https://www.gap-system.org}.
	\bibitem{GL}  X.~Guo, Q.~Li,  On generalization of minimal non-nilpotent groups,  \emph{Lobachevskii J Math} 31, 239–243 (2010)
	\bibitem{Go} D.~Gorenstein, \textit{Finite Groups}, AMS Chelsea Publishing, Providence, RI, 2007.
	\bibitem{Is} I.~M.~Isaacs, \textit{Character Theory of Finite Groups}, AMS Chelsea Publishing, 1976.
	\bibitem{LGM} C.~R.~Leedharn-Green, S.McKay, The Structure of Groups of Prime Power Order (Oxford, 2002; online edn, Oxford Academic, 31 Oct. 2023)
	\bibitem{MM} G.~A.~Miller, and H.~C.~Moreno, Non-Abelian Groups in Which Every Subgroup Is Abelian, \textit{Transactions of the American Mathematical Society}, vol. 4, no. 4, 1903, pp. 398–404.
	\bibitem{Rump2007} W.~Rump, Classification of cyclic braces, \textit{Journal of Pure and Applied Algebra}, vol.~209, no.~3, 2007, pp.~671--685.
	\bibitem{Sch} J.~Schur, Untersuchungen über die Darstellung der endlichen Gruppen durch gebrochene lineare Substitutionen, \textit{Journal für die reine und angewandte Mathematik} 132 (1907): 85-137.
	\bibitem{S} M.~Suzuki, On finite groups with cyclic Sylow subgroups for all odd primes,  \textit{American journal of mathematics} 1955, pp. 657-691.
	\bibitem{V} L.~Vendramin, Skew Braces: A Brief Survey, \textit{Geometric Methods in Physics XL. WGMP 2022}, Trends in Mathematics, Birkhäuser, Cham, 2024.
	\bibitem{Ito1951} N.~It\^o, Note on (LM)-groups of finite order, \textit{Kodai Mathematical Seminar Reports}, 1951, pp. 1-6.
	\bibitem{Robinson1969} D.~J.~S.~Robinson, Groups which are minimal with respect to normality being intransitive, \textit{Pacific Journal of Mathematics}, 31 (1969), pp. 777–785.
	\bibitem{GV17} L.~Guarnieri, L.~Vendramin, Skew braces and the Yang–Baxter equation, \textit{Mathematics of Computation}, 86(307):2519–2534, 2017.
	\bibitem{Rump2007bis}
	W.~Rump, \emph{Braces, radical rings, and the quantum Yang--Baxter equation}, 
	J. Algebra \textbf{307} (2007), 153--170.
	\bibitem{Drinfeld1992}
	V.~G.~Drinfeld, \emph{On some unsolved problems in quantum group theory}, 
	in P.~P.~Kulish (ed.), \emph{Quantum Groups}, pages 1--8, Springer, Berlin, Heidelberg, 1992.
	\bibitem{Bachiller2016}
	D.~Bachiller, F.~Cedó, and E.~Jespers, \emph{Solutions of the Yang--Baxter equation associated with a left brace}, 
	J. Algebra \textbf{463} (2016), 80--102.
	\bibitem{Bachiller2018}
	D.~Bachiller, \emph{Solutions of the Yang--Baxter equation associated to skew left braces, with applications to racks}, 
	J. Knot Theory Ramifications \textbf{27}(08) (2018), 1850055.
	\bibitem{Vendramin2019}
	L.~Vendramin, \emph{Problems on skew left braces}, 
	Adv. Group Theory Appl. \textbf{7} (2019), 15--37, MR 3974481.
	\bibitem{Smoktunowicz2018}
	A.~Smoktunowicz and L.~Vendramin, \emph{On skew braces (with an appendix by N. Byott and L. Vendramin)}, 
	J. Combin. Algebra \textbf{2}(1) (2018), 47--86, \href{http://dx.doi.org/10.4171/JCA/2-1-3}{doi:10.4171/jca/2-1-3}.
	
	\bibitem{Gorshkov2021}
	I.~Gorshkov and T.~Nasybullov, \emph{Finite skew braces with solvable additive group}, 
	J. Algebra \textbf{574} (2021), 172--183, 
	\href{https://doi.org/10.1016/j.jalgebra.2021.01.027}{doi:10.1016/j.jalgebra.2021.01.027}.
	
	\bibitem{Nasybullov2019}
	T.~Nasybullov, \emph{Connections between properties of the additive and the multiplicative groups of a two-sided skew brace}, 
	J. Algebra \textbf{540} (2019), 156--167, 
	\href{https://doi.org/10.1016/j.jalgebra.2019.05.005}{doi:10.1016/j.jalgebra.2019.05.005}.
	
	\bibitem{Byott2024}
	N.~P.~Byott, \emph{On insoluble transitive subgroups in the holomorph of a finite soluble group}, 
	J. Algebra \textbf{638} (2024), 1--31, 
	\href{https://doi.org/10.1016/j.jalgebra.2023.10.001}{doi:10.1016/j.jalgebra.2023.10.001}.
	
	\bibitem{Tsang2019}
	C.~Tsang and C.~Qin, \emph{On the solvability of regular subgroups in the holomorph of a finite solvable group}, 
	Int. J. Algebra Comput. \textbf{30}(02) (2019), 253--265, 
	\href{http://dx.doi.org/10.1142/S0218196719500735}{doi:10.1142/S0218196719500735}.
	
	\bibitem{Ballester-Bolinches2023}
	A.~Ballester-Bolinches, R.~Esteban-Romero, P.~Jiménez-Seral, and V.~Pérez-Calabuig, 
	\emph{Some group-theoretical approaches to skew left braces}, 
	Int. J. Group Theory \textbf{12}(2) (2023), 99--109.
	
	\bibitem{Kohl1998}
	T.~Kohl, \emph{Classification of the Hopf Galois structures on prime power radical extensions}, 
	J. Algebra \textbf{207} (1998), 525--546.
	
	
	\bibitem{Byott2007}
	N.~P.~Byott,
	\newblock Hopf--Galois structures on almost cyclic field extensions of $2$-power degree,
	\newblock \emph{J. Algebra} \textbf{318} (2007), 351--371.
	
	\bibitem{Trappeniers2023}
	S.~Trappeniers,
	\newblock On two-sided skew braces,
	\newblock \emph{J. Algebra} \textbf{631} (2023), 267--286.
	
	\bibitem{Bachiller2015}
	D.~Bachiller,
	\newblock Classification of braces of order $p^3$,
	\newblock \emph{J. Pure Appl. Algebra} \textbf{219} (2015), no.~8, 3568--3603.
	
	\bibitem{Cedo2018}
	F.~Cedó, A.~Smoktunowicz, and L.~Vendramin,
	\newblock Skew left braces of nilpotent type,
	\newblock \emph{Proc. Lond. Math. Soc.} \textbf{118} (2019), no.~6, 1367--1392.
	
	
	
	
	\end{thebibliography}
\end{document}